\documentclass[11pt]{amsart}
\usepackage{latexsym,amssymb}

\usepackage{graphicx} 
\usepackage{amsmath,amsthm}
\usepackage{amsfonts}
\usepackage{mathtools}
\usepackage{ amssymb }
\usepackage{ stmaryrd }
\usepackage{tikz-cd}
\usepackage[shortlabels]{enumitem}
\usepackage{ upgreek }
\usepackage{yfonts}
\usepackage{dsfont}
\usepackage{url}
\usepackage{graphicx}
\usepackage{hyperref}

\usepackage{afterpage}
\usepackage{ amssymb }
\usepackage{yfonts}
\usepackage[colorinlistoftodos]{todonotes}
\usepackage{enumitem}
\usepackage{fancyhdr}
\usepackage{orcidlink}
\input amssym.def \input amssym

\newtheorem{te}{Theorem}[section]

\newtheorem{de}[te]{Definition}
\newtheorem{lm}[te]{Lemma}

\newtheorem{pp}[te]{Proposition}
\newtheorem{co}[te]{Corollary}
\newtheorem{ex}[te]{Example}
\newtheorem{pb}[te]{Problem}
\newtheorem{re}[te]{Remark}

\newtheorem{alphthm}{Theorem}

\newcommand{\N}{\mathbb{N}}

\title{Remarks on ultrafilter exponentiation}

\author{Lorenzo Luperi Baglini}
\address{$^*$ Dipartimento di Matematica, Universit\`a  di Milano, Via Saldini 50, 20133 Milano, Italy}
\author{Boris \v Sobot}
\address{$^\dagger{}^\dagger$ Faculty of Sciences, University of Novi Sad, Trg Dositeja Obradovi\' ca 4, 21000 Novi Sad, Serbia}

\keywords{ultrafilters, nonstandard integers, exponentiation}
\subjclass[2020]{Primary: 54D80, 03C64. Secondary, 03H15}

\begin{document}

\begin{abstract}Recently, two exponentiations of ultrafilters have been introduced in \cite{LB}. We rephrase them in nonstandard terms, and use the nonstandard formulations to characterize some of their properties. We also study their relations with divisibility by introducing factor-picking functions and highly exponential ultrafilters.
\end{abstract}

\maketitle

\section{Introduction}

Combinatorial Ramsey theory is the study of the so-called partition regular configurations and equations on $\N=\{1,2,\ldots\}$. 

\begin{de} Let $f_{1}\left(x_{1},\ldots,x_{n}\right),\ldots,f_{m}\left(x_{1},\ldots,x_{n}\right):\N^{n}\rightarrow \N$. We say that the configuration $\{f_{1}\left(x_{1},\ldots,x_{n}\right),\ldots,f_{m}\left(x_{1},\ldots,x_{n}\right)\}$ is partition regular on $\N$ (PR from now on) if for all finite partitions $\N=A_{1}\cup\ldots\cup A_{k}$ there are $i\leq k$ and $x_{1},\ldots,x_{n}\in\N$ such that for all $j\leq m$ $f_{j}\left(x_{1},\ldots,x_{n}\right)\in A_{i}$.\end{de}

This area of combinatorics has been active for more than a century, with most results focusing on (finite and infinite) polynomial configurations. In recent years, there has been a growing interest on exponential configurations. Ultrafilters have been a major tool for this line of research; for an account on these developments we refer to \cite{DNR1, DNR2}, where Di Nasso and Ragosta have provided a simple and elegant proof of the partition regularity of $\{x,y,x^{y}\}$, as well as of several finite and infinite generalizations, with a clever use of special kinds of ultrafilters. 

These results have led the first named author of the present paper to introduce, in \cite{LB}, two notions of exponentiation between ultrafilters, mimicking the usual extension of binary operations from $\N^{2}$ to $\beta\N^{2}$:

\begin{de}\label{def:exp} Given $p,q\in\beta\mathbb{N}$, we let
\begin{eqnarray*}
E_1({p},{q}) &=& \{A\subseteq\mathbb{N}:\{n\in\mathbb{N}:\{m\in\mathbb{N}:n^m\in A\}\in{q}\}\in{p}\};\\
E_2({p},{q}) &=& \{A\subseteq\mathbb{N}:\{n\in\mathbb{N}:\{m\in\mathbb{N}:m^n\in A\}\in{q}\}\in{p}\}.
\end{eqnarray*}\end{de}

The extension of the exponentiation to ultrafilters fits a recent line of research based on the extensions of relations from $\N^{n}$ to $(\beta \N)^{n}$; we refer to \cite{gasparroluperi26} for general results and examples. Amongs these extensions, a relevant place is occupied by divisibility and congruences. These notions have been thoroughly studied by the second named author of this paper in a recent series of work, see \cite{So1, So2, So5, So8}. Most of this study has been based on a well-developed technique in the area, based on the identification of ultrafilters with their set of nonstandard generators; see e.g. \cite{DN,luperi2019nonstandard,L,P1,P2}. The basic idea of this approach is that several combinatorial properties of ultrafilters, and many relationships between different ultrafilters, can be better studied by translating them into properties of their generators. 

In this paper, we adopt this same nonstandard point of view to study certain properties of the ultrafilters exponentiations, and we start to inquire their relationships with divisibility. To this end, we introduce a class of standard functions well suited for our task, called factor-picking, as well as a family of extremal ultrafilters with respect to factor-picking functions, defined as follows.

\begin{de}\label{def:fp} Let $f:\N\rightarrow\N$. We say that $f$ is a factor-picking function if
\begin{enumerate}
\item for all $n\in\N$ $f(n)$ divides $n$;
\item for all $n,m\in\N$ $f(n^{m})=f(n)$.
\end{enumerate}
Given a factor-picking $f$, we say that $e_{f}:\N\rightarrow \N$ is its exact exponent if for all $n\in\N$ 
\[e_{f}(n)=\begin{cases} 1, & \text{if} \ f(n)=1;\\
\max\{m\in\N\mid f(n)^{m}\ \text{divides} \ n\}, &\text{otherwise.}\end{cases}\]

We say that ${p}\in\beta\mathbb{N}\setminus\N$ is $f$-highly exponential if for all $g:\N\rightarrow \N$ 
\[\{n\in\N\mid g(f(n))<e_{f}(n)\}\in p.\]

Finally, we say that $p$ is highly exponential if it is $f$-highly exponential for all factor picking functions with $f(p)\neq 1$.
\end{de}

A particularly relevant example of factor-picking function is the radical function ${\rm rad}$, that maps $n$ to the product of its prime factors. Our first main result is the following theorem, relating highly exponential ultrafilters, ${\rm rad}$ and divisibility.

\begin{alphthm}\label{alphthm:A}
Let $p,q\in\beta\mathbb{N}\setminus\{1\}$. 
\begin{enumerate}[label=(\roman*)]
\item If $p$ is highly exponential and ${\rm rad}(q)$ divides ${\rm rad}(p)$, then $q$ divides $p$.
\item Assume that $rad(p)$ does not strictly divide itself and that, for all $q$ such that $rad(q)=rad(p)$, we have that $q$ divides $p$. Then $p$ is highly exponential.
\end{enumerate}
\end{alphthm}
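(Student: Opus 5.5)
We work throughout with the nonstandard characterization of divisibility in $\beta\N$ from the second author's earlier work (\cite{So1, So2}): $q$ divides $p$ ($q\mid p$) iff there are $x,y\in{}^*\N$ with $x$ a generator of $q$, $y$ a generator of $p$, and $x\mid y$ in ${}^*\N$ (equivalently, for every generator $y$ of $p$ some generator $x$ of $q$ divides $y$). By $f(p)$ we mean the image ultrafilter, whose generators are ${}^*f(y)$ for $y$ a generator of $p$. The key translation of Definition~\ref{def:fp} is that $p$ is $f$-highly exponential iff for every (equivalently, some) generator $y$ of $p$ we have ${}^*e_f(y)>{}^*g({}^*f(y))$ for every $g:\N\to\N$. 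Taking a sufficiently saturated extension, this says that $e_f(y)$ dominates the ``standard part'' of ${}^*f(y)$: $e_f(y)$ exceeds every element of the $\N$-definable closure of $f(y)$.

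\textbf{Part (i).} Fix a generator $y$ of $p$ and write $r={\rm rad}(y)$, $e=e_{\rm rad}(y)$, so $r^e\mid y$. Since ${\rm rad}(p)\neq 1$ (because $p\neq 1$), high exponentiality with $f={\rm rad}$ gives $e>{}^*g(r)$ for every standard $g$. Because ${\rm rad}(q)\mid{\rm rad}(p)$, there is a generator $s$ of ${\rm rad}(q)$ with $s\mid r$. Now pick a generator $x$ of $q$ with ${\rm rad}(x)=s$. The idea is to use a standard function $h$ sending a squarefree $m$ to a value bounding the exponents of elements of a set in $q$. Concretely, for each $A\in q$, the set $\{m:\exists n\in A,\ {\rm rad}(n)=m\}$ lies in ${\rm rad}(q)$. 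Define $g_A(m)=\min\{\max\text{exponent of } n: n\in A,\ {\rm rad}(n)=m\}$. For $m=s$ we get $n\in{}^*A$ with radical $s$ and all exponents at most ${}^*g_A(s)\le{}^*g_A'(r)$, where $g'_A(r)=\max_{m\mid r}g_A(m)$ is standard. Hence $n\mid s^{g'_A(r)}\mid r^e\mid y$. So every $A\in q$ contains some $n$ with $n\mid y$, and a saturation (finite intersection) argument yields a single generator $x$ of $q$ with $x\mid y$. Thus $q\mid p$.

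\textbf{Part (ii).} Fix a factor-picking $f$ with $f(p)\neq1$, and suppose for contradiction that $p$ is not $f$-highly exponential. Then there are a generator $y$ of $p$ and a standard $g$ with $e:=e_f(y)\le{}^*g(f(y))$. Let $a=f(y)$, so $a\mid y$ and $a\neq1$. The plan is to build an ultrafilter $q$ with ${\rm rad}(q)={\rm rad}(p)$ whose generators require a high power of $a$, contradicting $q\mid p$. Take $x=y^{k}$ with $k$ infinite and much larger than ${}^*g(a)$. We cannot do this directly, since we need $x$ to generate a fixed ultrafilter, so we take $q=E_1(p,u)$ for a nonprincipal $u$, or more cleanly $q$ generated by $y^{z}$ with $z$ a generator of some $u$ in a higher-level extension. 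Then ${\rm rad}(y^z)={\rm rad}(y)$, so ${\rm rad}(q)={\rm rad}(p)$. By hypothesis $q\mid p$, so some generator $y'$ of $p$ is divisible by a generator $x'$ of $q$. Property (2) of factor-picking gives $f(x')\sim f(y')\sim a$ (as ultrafilters, $f(q)=f(p)$). Moreover $e_f(x')$ exceeds every $g(f(x'))$, since $e_f(n^m)\ge m$ and $m$ is $u$-large, independent of $n$. Divisibility $x'\mid y'$ transfers to show $e_f(y')\ge e_f(x')$, given $f(x')\mid f(y')$. For this comparison we need $f(x')=f(y')$, and here the hypothesis that ${\rm rad}(p)$ does not strictly divide itself is used to exclude the case where $f(x')$ is a proper divisor of $f(y')$ realizing the same type. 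Then $e_f(y')>{}^*g(f(y'))$, and transferring back to $y$ gives the contradiction.

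\textbf{Main obstacle.} The delicate step is the last one in (ii): comparing $f(x')$ and $f(y')$ when only $x'\mid y'$ is known, and converting ``no strict self-divisibility of ${\rm rad}(p)$'' into the equality $f(x')=f(y')$. I would first try to prove $f(x')\mid f(y')$ using $f(x')\mid x'\mid y'$ together with the radicals, and then apply the non-strict-self-division hypothesis to the pair $({\rm rad}\,x',{\rm rad}\,y')$.
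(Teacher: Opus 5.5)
Your part (i) is correct, and it takes a different route from the paper. You keep a generator $y$ of $p$ fixed and, for each $A\in q$, use the standard bound $g'_A({\rm rad}(y))<e_{\rm rad}(y)$ to find an element of ${}^*A$ dividing $y$, then saturate. The paper instead fixes a generator $\gamma_1$ of $q$, bounds it by $\eta_1^{\mu_1}$, and uses Lemma~\ref{FundRemCut} to replace the generator of $p$ by one whose exponent exceeds $\mu_1$. Your version only uses that $p$ is ${\rm rad}$-highly exponential and never needs to write a generator of $p$ as a power. (The paper's step $\eta_2^{\mu_2}\sim\gamma_2$ tacitly needs $PB(\gamma_2)$ in place of $\eta_2={\rm rad}(\gamma_2)$.)

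Part (ii) has a genuine gap exactly where you flag it, and it cannot be closed for an arbitrary factor-picking $f$. First, $x'\mid y'$ does not give $f(x')\mid f(y')$: for example $24\mid 144$ but $PB(24)=24\nmid 12=PB(144)$. Second, even if $f(x')\mid f(y')$, these are two generators of $f(p)$, and the hypothesis on ${\rm rad}(p)$ says nothing about $f(p)$.

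In fact (ii) is false as stated. Let $a\in{}^*\N\setminus\N$ and let $p$ be generated by $\delta=2^{a}3^{a+1}$.
\begin{itemize}
\item ${\rm rad}(p)$ is principal (equal to $6$), so it does not strictly divide itself.
\item Any $q$ with ${\rm rad}(q)=6$ has a generator $2^c3^d$. Picking, by saturation, $a'\sim a$ with $a'\geq\max(c,d)$ gives $2^c3^d\mid 2^{a'}3^{a'+1}\sim\delta$, so $q$ divides $p$.
\item Yet $\gcd(a,a+1)=1$ gives $PB(\delta)=\delta$ and $e_{PB}(\delta)=1$, so $p$ is not $PB$-highly exponential, hence not highly exponential.
\end{itemize}
In your construction with $f=PB$ you would get $f(x')=2^{a''}3^{a''+1}$, a proper divisor of $f(y')=2^{a'}3^{a'+1}$. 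Both generate $PB(p)=p$, which strictly divides itself even though ${\rm rad}(p)$ does not.

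What your argument does establish is the case $f={\rm rad}$. There ${\rm rad}(x')\mid{\rm rad}(y')$ is automatic, and both generate ${\rm rad}(q)={\rm rad}(p)$, so they are equal. Equal radicals together with $x'\mid y'$ give $e_{\rm rad}(y')\geq e_{\rm rad}(x')>{}^*g({\rm rad}(y'))$, contradicting $y'\sim y$. So the salvageable statement is that $p$ is ${\rm rad}$-highly exponential. This cannot be upgraded through Proposition~\ref{maxhighexp}, since the example above is ${\rm rad}$-highly exponential but not $PB$-highly exponential.

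The paper's own proof of (ii) breaks at the corresponding point. From ${\rm rad}(\gamma)\mid{\rm rad}(\alpha)$ and the non-self-division of ${\rm rad}(p)$ it infers $\gamma=\alpha$, whereas only ${\rm rad}(\gamma)={\rm rad}(\alpha)$ follows.
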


Our second main result regards the equation $E_{1}(p,q)=E_{2}(p,q)$; identifying its solutions was left as an open question in \cite{LB}. When at least one between $p,q$ is principal, it is simple to show that $E_{1}(p,q)=E_{2}(p,q)$ holds true. However, when both $p,q$ are nonprincipal, this equality is no longer guaranteed: as shown in \cite[Section 2]{LB}, it holds for certain pairs $(p,q)$, and fails for others. In Section \ref{sec:twoexp} we prove that solutions to $E_{1}(p,q)=E_{2}(p,q)$ must satisfy strict restrictions.
 
\begin{alphthm}\label{alphthm:B}
Let ${p},{q}\in\beta\mathbb{N}$ be such that $E_1({p},{q})=E_2({q},{p})$. Let $f$ be a factor-picking function with $f(p)\neq 1$ and let $e_{f}$ be its exact exponent. Then 
\begin{enumerate}[label=(\roman*)]
\item $p$ is $f$-highly exponential;
\item $e_{f}({p})\odot{q}={q}\odot e_{f}({p})$.
\end{enumerate}
\end{alphthm}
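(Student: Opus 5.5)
Throughout I use the nonstandard characterization of the two exponentiations from the earlier sections. If $\alpha\sim p$ and $\beta\sim q$ in ${}^*\N$, then $E_1(p,q)$ is generated by $\alpha^{{}^*\beta}$ and $E_2(q,p)$ by $({}^*\alpha)^{\beta}$. The hypothesis $E_1(p,q)=E_2(q,p)$ therefore says $\alpha^{{}^*\beta}\sim({}^*\alpha)^{\beta}$. In particular, for every standard $F:\N\to\N$ we get $F(\alpha^{{}^*\beta})\sim F(({}^*\alpha)^\beta)$, and for every standard relation the two generators agree. Since $f$ is factor-picking, transfer gives $f(\alpha^{{}^*\beta})=f(\alpha)$ and $f(({}^*\alpha)^\beta)={}^*f(\alpha)$. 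Both generate $f(p)\neq 1$, so $f(\alpha)\neq 1$ and $e_f$ is given by the ``max'' clause at all the relevant points.

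The first technical step is to control $e_f$ of a power. Writing $r=f(n)\neq1$, one has $r^m\mid n^b$ iff $m\,v_\ell(r)\le b\,v_\ell(n)$ for every prime $\ell\mid r$. Hence
\[ b\,e_f(n)\le e_f(n^b)< b\,(e_f(n)+1). \]
This yields the following bounds on the two generators:
\[ e_f(\alpha)\,{}^*\beta\le e_f(\alpha^{{}^*\beta})<(e_f(\alpha)+1)\,{}^*\beta \]
and
\[ \beta\,{}^*e_f(\alpha)\le e_f(({}^*\alpha)^\beta)<\beta\,({}^*e_f(\alpha)+1). \]
To get exact equalities, I would replace $e_f$ by a refined standard function $\tilde e(x)=\gcd_{\ell\mid f(x)} v_\ell(x)$, or by the pair of vectors $(v_\ell(x))_{\ell\mid f(x)}$. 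Since $f(n^b)=f(n)$, this refinement is exactly multiplicative: $\tilde e(n^b)=b\,\tilde e(n)$. I would also keep track of the standard map $x\mapsto x^{1/\tilde e(x)}$, the ``primitive root'' of $x$ relative to $f(x)$.

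For (i), argue by contradiction. Suppose $g$ witnesses that $p$ is not $f$-highly exponential, so $e_f(\alpha)\le g(f(\alpha))$. Let $S$ be the standard set of $x$ that can be written as $x=y^b$ with $f(y)=f(x)$, $e_f(y)\le g(f(x))$ and $b\le \min(\text{primes of } f(x))$. I do not insist on this exact bound: any standard function of $f(x)$ that lies above the level-one element $\beta$ but below ${}^*$-level elements would serve.

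Then $({}^*\alpha)^\beta\in{}^*S$, because ${}^*f(\alpha)$ lives at the second level and $\beta$ is dominated by every infinite standard function of it. On the other hand, I expect $\alpha^{{}^*\beta}\notin{}^*S$. Any representation $\alpha^{{}^*\beta}=y^b$ with $f(y)=f(\alpha)$ forces $b\,e_f(y)\approx e_f(\alpha)\,{}^*\beta$, by the displayed bounds. Now $e_f(y)$ is bounded by $g(f(\alpha))$, a first-level number, while ${}^*\beta$ exceeds every function of first-level numbers. So $b$ must exceed any standard function of $f(\alpha)$, and hence $b$ violates the bound. This contradicts $\alpha^{{}^*\beta}\sim({}^*\alpha)^\beta$.

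For (ii), apply the refined exponent to both generators. This gives $\tilde e(\alpha)\,{}^*\beta\sim\beta\,{}^*\tilde e(\alpha)$. By the standard nonstandard characterization of $\odot$, the left side generates $\tilde e(p)\odot q$ and the right side generates $q\odot\tilde e(p)$. It then remains to pass from $\tilde e$ back to $e_f$. This goes either by showing that $e_f$ and $\tilde e$ agree on a set in $p$ (using (i), which makes $e_f(\alpha)$ huge compared to $f(\alpha)$), or by applying $e_f$ directly to both sides, since $\lfloor\cdot\rfloor$-type corrections are absorbed once the ratios $v_\ell/v_{\ell'}$ are fixed on a $p$-large set.

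The main obstacle is this non-multiplicativity of $e_f$ under powers when $f(n)$ is not squarefree-compatible with $n$. I would first try to show, using (i) together with a partition of $\N$ according to the finitely-valued pattern of the ratios $v_\ell(n)/v_\ell(f(n))$ modulo rounding, that on a $p$-large set $e_f(n^b)=b\,e_f(n)$ exactly.
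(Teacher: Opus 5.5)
\textbf{The gap is in (ii).} It sits at the step you postpone: getting from $\tilde e$ back to $e_f$. Your argument correctly gives $\tilde e(p)\odot q=q\odot\tilde e(p)$. Neither proposed bridge can work, because (ii) is false in the stated generality: it can fail for factor-picking $f$ with non-squarefree values.

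Here is a counterexample.
\begin{itemize}
\item Let $f(n)=2^{\min(v_2(PB(n)),2)}$. This is factor-picking, since $f(n)\mid PB(n)\mid n$ and $PB(n^m)=PB(n)$.
\item Let $x_k=(2\cdot 4^k+1)/3$; each $x_k$ is odd and $(3x_k-1)/2=4^k$.
\item Fix a nonprincipal $r$, and let $q$ and $p$ be the images of $r$ under $k\mapsto x_k$ and $k\mapsto 24^{x_k}$.
\end{itemize}
For a tensor pair $(\kappa,\mu)$ generating $r\otimes r$, both $E_1(p,q)$ and $E_2(q,p)$ are generated by the same number $24^{x_\kappa x_\mu}$, so the hypothesis holds. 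Also $f(p)=4$, and (i) holds.

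Since $24$ is not a perfect power, $e_f(24^m)=\lfloor 3m/2\rfloor$. Hence $e_f(24^{x_k})=4^k$, while $\tilde e(24^{x_k})=3x_k$. So $e_f(p)\odot q$ and $q\odot e_f(p)$ are generated by $4^\kappa x_\mu$ and $x_\kappa 4^\mu$ respectively. The set $\{n\in\N : n>4^{v_2(n)}\}$ contains the first but not the second, because $x_\mu>4^\kappa$ and $x_\kappa<4^\mu$.

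Note that $f$ is constant on a set in $p$ and (i) holds. Yet for $n=24^{x_k}$ and every odd $b$ one has $e_f(n^b)-b\,e_f(n)=(b-1)/2$. So the rounding error is never absorbed, and no partition of $\N$ makes $e_f$ multiplicative on a $p$-large set.

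\textbf{Comparison with the paper.} The paper's proof avoids this obstacle only by treating $e_f(a^b)=b\,e_f(a)$ as true by definition. In fact, set $\rho(n)=\min_{\ell\mid f(n)}v_\ell(n)/v_\ell(f(n))$. Then $e_f(n)=\lfloor\rho(n)\rfloor$ and $e_f(n^b)=\lfloor b\rho(n)\rfloor$. So your two-sided bounds are exactly right, and the identity holds for all $b$ precisely when $\rho(n)$ is an integer. This is the case for squarefree-valued $f$ (${\rm rad}$ and every function in the paper's list of examples) and for $f=PB$.

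For such $f$ the paper's route works and needs no $\tilde e$; note $\tilde e\neq e_f$ even for ${\rm rad}$, since one is a gcd and the other a min. Applying $(f,e_f)$ to $\alpha^{\beta_1}\sim\alpha^{\beta_2}$ gives $e_f(\alpha)\beta_1\sim e_f(\alpha)\beta_2$. The tensor pairs $(\beta_1,e_f(\alpha))$ and $(e_f(\alpha),\beta_2)$ then identify these as generators of $q\odot e_f(p)$ and $e_f(p)\odot q$.

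\textbf{Part (i).} Your separating-set argument is a valid alternative to the paper's $\ll$-argument. It uses only your inequalities, which also repair the paper's appeal to the identity there. It needs one fix: the bound ``$b\le$ least prime factor of $f(x)$'' fails when $f(\alpha)$ is infinite but has a finite prime factor. Require $b\le f(x)$ instead. Then $\alpha^{{}^*\beta}=y^b$ would give ${}^*\beta\le e_f(\alpha^{{}^*\beta})<b\,(e_f(y)+1)\le f(\alpha)\,(g(f(\alpha))+1)\in{}^*\N$, which is absurd.

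Like the paper's argument, yours tacitly needs $f(\alpha)$ infinite. When $f(p)$ is principal, run the argument for $PB$ instead (this is trivial if $PB(p)$ is principal) and invoke Proposition \ref{maxhighexp}.
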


The paper is organized as follows. In Section \ref{sec:nsa} we recall many basic facts about the nonstandard approach to ultrafilters that are useful for our purposes. In Section \ref{sec:heu} we focus on the properties of highly exponential ultrafilters, and we prove Theorem \ref{alphthm:A}. In Section \ref{sec:twoexp} we focus on the problem $E_{1}(p,q)=E_{2}(p,q)$, and we prove our Theorem \ref{alphthm:B}. In Section \ref{sec:genexp} we consider the most general exponentiation available on $\beta(\N^{2})$, and we generalize the problem $E_{1}(p,q)=E_{2}(p,q)$ to this setting proving some preliminary result towards its solution. Finally, in Section \ref{sec:conclusions} we collect some open problems generated by our results.

\section{Nonstandard generators}\label{sec:nsa}

We will work in a nonstandard extension $^{\ast}\N$ of ${\mathbb{N}}$, which we assume to be a ${2^{|\beta\mathbb{N}|}}^+$-saturated monster model; this ensures that internal sets in ${\mathbb{N}}$ are either finite or of (external) cardinality larger than that of $\beta\mathbb{N}$. We will follow the common practice of keeping the use of $\ast$ in front of symbols to the bare minimum.

The key concept we will use is that of nonstandard generator of an ultrafilter; the interested reader can find a thorough study of such generators in \cite{DN}.

\begin{de} Let $n\in\N$, let $p\in\beta(\N^{n})$. We say that $\alpha\in\,^{\ast}\N^{n}$ is a generator of $p$ if, for all $A\in p$, $\alpha\in\,^{\ast}A$. In this case we will write that $p=p_{\alpha}$ or, equivalently, that $\alpha\models p$. We let $\mu(p)$ denote the set of generators of $p$, and write $\alpha\sim\beta$ whenever $\alpha,\beta$ generate the same ultrafilter.\end{de}

Notice that $\alpha\sim\beta$ means, equivalently, that $\alpha$ and $\beta$ belong to the same standard sets; hence, whenever $P(x)$ is a standard property, we have that $P(\alpha)$ holds if and only if $\alpha\in\,^{\ast}\{n\in\N\mid P(n)\}$ if and only if $\beta\in\,^{\ast}\{n\in\N\mid P(n)\}$ if and only if $P(\beta)$ holds. We will use this fact throughout the paper without ever mentioning it again.

In this paper we will be mostly interested on ultrafilters over $\N$ and $\N^{2}$. In the latter case, a special role is played by the so-called tensor product of ultrafilters, whose definition we recall here (see \cite{luperi2019nonstandard} for a detailed study of tensors from the nonstandard perspective).

\begin{de} Let $p\in\beta(\N^{2})$, $q,r\in\beta\N$. 

\begin{enumerate}
\item[(a)] The product $q\times r$ is the filter generated by sets $A\times B$ for $A\in q$, $B\in r$.
\item[(b)] An ultrafilter $p$ is the tensor product of $q,r$ (notation: $p=q\otimes r$) if for all $A\subseteq\N^{2}$ we have that
\[A\in p\Leftrightarrow \{n\in\N\mid \{m\in\N\mid (n,m)\in A\}\in r\}\in q.\]
\end{enumerate}
A pair $(\alpha,\beta)\in\,^{\ast}\N$ will be called a tensor pair if it generates a tensor product.
\end{de}

The last nonstandard notions we need to recall are those of ``being arbitrarily larger'' and exact image.

\begin{de} Let $\alpha,\beta\in\,^{\ast}\N$. We say that $\alpha\ll\beta$ if for all $f:\N\rightarrow\N$ we have that $f(\alpha)<\beta$. We let ${\rm er}(\alpha):=\{f(\alpha)\mid f:\N\rightarrow\N\}\setminus\N$.\end{de}

We collect here some known properties of the above notions that we will use in the following.

\begin{lm}\label{lem:list} Let $p,q\in\beta\N$ $r\in\beta(\N^{2})$, let $\alpha,\beta,\gamma\in\,^{\ast}\N$, let $f,g:\N\rightarrow\N$. The following facts hold:
\begin{enumerate}[label=(\roman*)]
\item\label{T1} If $\alpha\sim \beta$ and $h:\N\rightarrow\N^k$, then $h(\alpha)\sim h(\beta)$.
\item\label{T2} If $f(\alpha)\sim\beta$, then there exists $\alpha^{\prime}\sim\alpha$ with $f(\alpha^{\prime})=\beta$.
\item\label{T3} If $f(\alpha)\sim \alpha$, then $f(\alpha)=\alpha$.
\item\label{T4} The following are equivalent.
\begin{enumerate}
\item $f(\alpha)\sim g(\beta)$.
\item There exists $\beta'\sim \beta$ such that $f(\alpha)= g(\beta')$.
\end{enumerate}
\item\label{T5} $(\alpha,\beta)$ is a tensor pair if and only if for all $f:\N\rightarrow \N$ we have $f(\beta)\in\N$ or $\alpha<f(\beta)$.
\item\label{T6} For all $n\in\N$, $(n,\alpha)$ and $(\alpha,n)$ are tensor pairs.
\item\label{T7} If $(\alpha,\beta)$ is a tensor pair, then $p_{(\alpha,\beta)}=p_{\alpha}\otimes p_{\beta}$.
\item\label{T8} If $r\supseteq p_{\alpha}\times p_{\beta}$, then for all $(\eta,\sigma)\in \mu(r)$ we have $\eta\sim\alpha,\sigma\sim\beta$.
\item\label{T9} If $r\supseteq p_{\alpha}\times p_{\beta}$, there exist $\alpha^{\prime}\sim\alpha$, $\beta^{\prime}\sim \beta$ such that $(\alpha,\beta^{\prime})\sim (\alpha^{\prime},\beta)\in\mu(r)$.
\item\label{T10} If $(\alpha,\beta)$ is a tensor pair, then so is $(f(\alpha),g(\beta))$.
\item\label{T11} If $(\alpha,\beta)$ is a tensor pair, with $\beta\in\,^{\ast}\N\setminus\N$, then $\alpha\ll\beta$. The converse does not hold.
\item\label{T12} $\alpha\ll \beta\gamma$ if and only if $\alpha\ll \beta$ or $\alpha\ll\gamma$.
\item\label{T13} Let $\gamma\in\,^{\ast}\N$. If $\alpha\leq\beta\ll\gamma\leq\delta$ then $\alpha\ll\delta$.
\end{enumerate}
    
\end{lm}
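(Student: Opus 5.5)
This lemma collects known facts, so the plan is to verify each item from the definitions, using saturation and the transfer principle, and citing \cite{DN,luperi2019nonstandard} where the argument is standard.

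\textbf{Items \ref{T1}--\ref{T4}.} For \ref{T1}, observe that $h(\alpha)\in{}^\ast A$ if and only if $\alpha\in{}^\ast(h^{-1}(A))$; since $\alpha$ and $\beta$ lie in the same standard sets, so do $h(\alpha)$ and $h(\beta)$. For \ref{T2}, we need some $\alpha'$ with $\alpha'\in{}^\ast A$ for all $A\in p_\alpha$ and $f(\alpha')=\beta$. It suffices to check that this type is finitely satisfiable and then apply saturation. Given $A\in p_\alpha$, we have $f(A)\in p_{f(\alpha)}=p_\beta$, so $\beta\in{}^\ast f(A)$, and transfer gives some $\alpha'\in{}^\ast A$ with $f(\alpha')=\beta$. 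Item \ref{T3} is the classical fact that $f(p)=p$ forces $f$ to be the identity on a set in $p$. Its standard proof partitions $\{n: f(n)\neq n\}$ into three pieces $A_i$ with $f(A_i)\cap A_i=\emptyset$. If $f(\alpha)\neq\alpha$, then $\alpha$ lies in some ${}^\ast A_i$ while $f(\alpha)$ does not, which contradicts $f(\alpha)\sim\alpha$. Item \ref{T4} follows from \ref{T2} applied to $g$ and $\beta$: if $g(\beta)\sim f(\alpha)$, there is $\beta'\sim\beta$ with $g(\beta')=f(\alpha)$. The converse direction is immediate from \ref{T1}.

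\textbf{Items \ref{T5}--\ref{T11}.} For \ref{T5}, I would follow the nonstandard characterization of tensor pairs in \cite{luperi2019nonstandard}. The condition ``$\alpha<f(\beta)$ whenever $f(\beta)$ is infinite'' is equivalent to saying that every set $A$ in the tensor product has $(\alpha,\beta)\in{}^\ast A$. To see this, consider the sections $A_n=\{m:(n,m)\in A\}$ and argue by overspill/transfer on the internal sequence ${}^\ast(A_n)$. This is the step I expect to be the main obstacle. Items \ref{T6}, \ref{T7} and \ref{T10} then follow from \ref{T5} together with the definitions; \ref{T10} holds because the functions of $g(\beta)$ are among the functions of $\beta$. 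For \ref{T8}, apply $\pi_1$ and $\pi_2$ using \ref{T1}. For \ref{T9}, use saturation again, as in \ref{T2}, now applied to the projections. For \ref{T11}, $\alpha<f(\beta)$ holds for all $f$ with $f(\beta)$ infinite; since $\beta$ is infinite, the functions $f$ with $f(\beta)\in\N$ can be replaced by $f+\mathrm{id}$, which gives $\alpha\ll\beta$. The non-converse is shown by a standard counterexample, for instance $\beta$ with $\alpha\ll\beta$ but $\alpha\geq f(\beta)$ for a suitable $f$ such as a logarithm-type map.

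\textbf{Items \ref{T12} and \ref{T13}.} For \ref{T12}, note that $f(\alpha)<\beta\gamma$ holds whenever $f(\alpha)<\beta$ or $f(\alpha)<\gamma$ and the other factor is at least $1$; this gives the backward direction. For the forward direction, suppose $\alpha\not\ll\beta$ and $\alpha\not\ll\gamma$, witnessed by $f_1$ and $f_2$. Then $\alpha\not\ll\beta\gamma$ is witnessed by the function $f_1f_2$. Item \ref{T13} is immediate: $f(\alpha)\le \max_{k\le\alpha}f(k)=F(\beta')$ for the monotone majorant $F$ of $f$, so it suffices to handle monotone $f$. For those, $f(\alpha)\le f(\beta)<\gamma\le\delta$.
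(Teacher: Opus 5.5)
Your items \ref{T1}--\ref{T4}, \ref{T12} and \ref{T13} are correct and match the paper; for \ref{T3} you even supply the three-set argument that the paper only cites. The genuine gap is in \ref{T11}, and the same missing idea breaks your justification of \ref{T10}. By definition, $\alpha\ll\beta$ means $f(\alpha)<\beta$ for every $f$: every function of $\alpha$ stays below $\beta$. What you derive is that $\alpha<f(\beta)$ whenever $f(\beta)$ is infinite (and $\alpha<f(\beta)+\beta$ in general). That is just the condition of \ref{T5} restated, and it says nothing about the size of $f(\alpha)$. The missing step is to invert $f$. If $f(\alpha)\geq\beta$, put $g(n)=\min\{m\mid f(m)\geq n\}$. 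Then $g(\beta)\leq\alpha$, and $g(\beta)$ is infinite, since otherwise the standard number $f(g(\beta))$ would be $\geq\beta$. This contradicts \ref{T5}, and it is the paper's argument. The same issue arises in \ref{T10}. Knowing that functions of $g(\beta)$ are functions of $\beta$ only gives $\alpha<h(g(\beta))$. But \ref{T5} for $(f(\alpha),g(\beta))$ demands $f(\alpha)<h(g(\beta))$, and $f(\alpha)$ may be far larger than $\alpha$ (take $f(n)=2^n$). You need the same inversion, applied to $k=h\circ g$ instead of the identity, to get $f(\alpha)<k(\beta)$ whenever $k(\beta)$ is infinite. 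Alternatively, use the standard identity $(f\times g)(p\otimes q)=f(p)\otimes g(q)$, which is what the paper's citation provides.

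Your proposed witness for the failure of the converse of \ref{T11} cannot work either. Suppose $\alpha\ll\beta$ and $f$ is nondecreasing and unbounded. Then $h(n)=\min\{m\mid f(m)>n\}$ satisfies $h(\alpha)<\beta$, hence $f(\beta)\geq f(h(\alpha))>\alpha$. So no logarithm-type map can violate \ref{T5}. A witness must be non-monotone and extract a smaller infinite number, as the $2$-adic valuation does in the paper's example. For instance, take $\beta,\gamma$ infinite with $(\beta,\gamma)$ a tensor pair. Then $\beta\ll\gamma\leq 2^{\beta}3^{\gamma}$ by \ref{T11}, so $\beta\ll 2^{\beta}3^{\gamma}$ by \ref{T13}. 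Yet the $2$-adic valuation of $2^{\beta}3^{\gamma}$ is $\beta$, so $(\beta,2^{\beta}3^{\gamma})$ is not a tensor pair. Finally, \ref{T5}, which the paper simply cites as Puritz's theorem, is easier than you fear and needs no overspill. For the forward direction, if $f(\beta)$ is infinite, every standard section of $A=\{(n,m)\mid f(m)>n\}$ lies in $p_\beta$. Hence $A\in p_\alpha\otimes p_\beta$, and a tensor pair must satisfy $\alpha<f(\beta)$. For the converse, suppose $B=\{n\mid A_n\in p_\beta\}\in p_\alpha$ but $(\alpha,\beta)\notin{}^{\ast}A$. Then $f(m)=\min\{n\in B\mid m\notin A_n\}$ has $f(\beta)\leq\alpha$. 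Yet $f(\beta)$ can be neither a standard $k$ (since $A_k\in p_\beta$) nor infinite (by the hypothesis).
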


\begin{proof} \ref{T1}-\ref{T2}-\ref{T3} These are well known trivial facts, see e.g. \cite[Proposition 11.2.4-Theorem 11.2.7]{DN}.
		
		\ref{T4} This is a particular istance of \ref{T2}.

    \ref{T5} This is Puritz's Theorem, see \cite{P2}.

    \ref{T6} This is immediate from \ref{T5}.
    
    \ref{T7}-\ref{T8} These are immediate, see \cite[Proposition 4.6]{luperi2019nonstandard}.

    \ref{T9} This is a standard saturation argument, see e.g. \cite[Proposition 2.3.(viii)]{gasparroluperi26}.

    \ref{T10} This is a rephrasing of \cite[Theorem 4.15.(7)]{luperi2019nonstandard}

    \ref{T11} If $f:\N\rightarrow\N$ is such that $f(\alpha)>\beta$, the function $g:\N\rightarrow\N$ defined by $g(n)=\min \{m\in\N\mid f(m)>n\}$ is such that $g(\beta)\leq\alpha$, $g(\beta)\notin\N$, contradicting \ref{T5}. The converse does not hold because, e.g, if $(\alpha,\beta)$ and $(\beta,\gamma)$ are tensor, then $2^{\alpha}3^{\beta}\ll 2^{\beta}3^{\gamma}$ but the $2$-adic valuation of the latter is smaller than the former, contradicting \ref{T5}.

\ref{T12} Right to left is immediate. As for left to right, assume the opposite, that there are $f:\mathbb{N}\rightarrow\mathbb{N}$ and $g:\mathbb{N}\rightarrow\mathbb{N}$ such that $ f(\alpha)\geq \beta$ and $ g(\alpha)\geq \gamma$. If we define $h(n)=f(n)\cdot g(n)$, clearly $ h(\alpha)\geq \beta\gamma$, a contradiction.

\ref{T13}. Assume that $f(\alpha)\geq\delta$. Let $h:\N\rightarrow\N$ be the function mapping $n$ to $\max\{f(m)\mid m\leq n\}$. Then $h(\beta)\geq f(\alpha)\geq \gamma$, against $\beta\ll\gamma$. \end{proof}

\section{Highly exponential ultrafilters}\label{sec:heu}

In this section we work with factor-picking functions, and highly exponential ultrafilters, as introduced in Definition \ref{def:fp}. Amongst these, a special role is played by the principal basis and the radical functions.

\begin{de}
For $n\in{\mathbb{N}}\setminus\{1\}$, we let the principal exponent of $n$ (denoted by $PE(n)$) to be the largest $b\in{\mathbb{N}}$ such that there exists $a\in\N$ with $n=a^b$. Moreover, we let the principal basis of $n$ (denoted by $PB(n)$) to be the unique $a\in\N$ such that $n=a^{PE(n)}$. For $n=1$ we let $PB(1)=PE(1)=1$.

We also let the radical of $n$ (notation: ${\rm rad}(n)$) to be the product of the prime divisors of $n$.
\end{de}

Hence, every $n\in\N\setminus\{1\}$ can be written uniquely in the form $n=PB(n)^{PE(n)}$. We recall that, when $PE(n)\geq 2$, $n$ is said to be a perfect power; in particular, $n=PB(n)$ if and only if $n$ is not a perfect power. It is also immediate to notice that $PE(n)$ is the greatest common divisor of the exponents of its prime factorization. We also notice that the exact exponent of ${\rm rad}$ is the function $e_{\rm rad}$ mapping $n$ to the smallest exponent appearing in its prime factorization. 

Factor-picking functions are rather common. We list here a few more relevant examples.

\begin{ex} The following are all factor-picking functions:
\begin{enumerate}
\item for all prime $p$, the function that maps $n$ to $p$ if $p$ divides $n$, or to $1$ otherwise;
\item for all $m\in\N$, the function mapping $n$ to its $m$-th prime divisor, if $n$ has at least $m$ divisors, or to $1$ otherwise;
\item the function mapping $n$ to the product of its first $m$ prime divisors, if $n$ has at least $m$ divisors, or to $1$ otherwise;
\item the function mapping $n$ to the product of its last $m$ prime divisors, if $n$ has at least $m$ divisors, or to $1$ otherwise.
\end{enumerate}
\end{ex}

A trivial, yet useful, fact that we will use is that, if $f$ is a factor-picking function and $e_{f}$ is its exact exponent, by definition $e_{f}(a^{b})=b\cdot e_{f}(a)$ whenever $f(a)\neq 1$. As said in the introduction, factor-picking functions are related to $f$-highly exponential ultrafilters (see Definition \ref{def:fp}). Being $f$-highly exponential is more easily seen through generators: call $\alpha\in\,^{\ast}\N$ $f$-highly exponential if $f(\alpha)\ll e_{f}(\alpha)$. Then it can be routinely checked that $p$ is $f$-highly exponential iff there is an $f$-highly exponential $\alpha\in\mu(p)$ iff all $\alpha\in\mu(p)$ are $f$-highly exponential. When $f(\alpha)=1$, i.e. when $\alpha\in\,^{\ast}\{n\in\N\mid f(n)=1\}$, no power $\alpha^{\beta}$ can be $f$-highly exponential. On the contrary, when $f(\alpha)\neq 1$, powers of $\alpha$ can be $f$-highly exponential: for example, if $\alpha\ll\beta$ then $\alpha^{\beta}$ is $f$-highly exponential for all $f$ factor-picking such that $f(\alpha)\neq 1$. Actually, the following characterization, clearing the special role of $PB$ and ${\rm rad}$ amongst factor-picking functions, holds true.

\begin{pp}\label{maxhighexp}
For every $p\in\beta\N\setminus\N$, the following conditions are equivalent:

\begin{enumerate}[label=(\roman*)]
\item\label{E1} $p$ is highly exponential;

\item\label{E5} $p$ is $f$-highly exponential for some factor-picking function $f$ such that ${\rm rad}(\alpha)\mid f(\alpha)$ for some $\alpha\models p$;

\item\label{E2} $p$ is ${\rm rad}$-highly exponential;

\item\label{E3} $p$ is $PB$-highly exponential;

\item\label{E4} There exist $\alpha\ll\beta, \alpha>1$ such that $\alpha^{\beta}\models p$.
\end{enumerate}
\end{pp}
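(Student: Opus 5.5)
I will prove the cycle of implications \ref{E1}$\Rightarrow$\ref{E2}$\Rightarrow$\ref{E5}$\Rightarrow$\ref{E4}$\Rightarrow$\ref{E1}, and separately \ref{E4}$\Leftrightarrow$\ref{E3}. Throughout I use the generator reformulation: $p$ is $f$-highly exponential if and only if $f(\alpha)\ll e_f(\alpha)$ for some (equivalently, every) $\alpha\models p$.

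\textbf{The easy implications.}
\begin{itemize}
\item[\ref{E1}$\Rightarrow$\ref{E2}:] Since $p$ is nonprincipal, ${\rm rad}(p)\neq 1$ (otherwise $p$ would be principal at $1$). Hence ${\rm rad}$ is among the functions quantified over in \ref{E1}.
\item[\ref{E2}$\Rightarrow$\ref{E5}:] Take $f={\rm rad}$.
\item[\ref{E4}$\Rightarrow$\ref{E1}:] This is the remark preceding the proposition. Write $p\models\alpha^\beta$ with $1<\alpha\ll\beta$, and let $f$ be factor-picking with $f(p)\neq 1$. Then $f(\alpha^\beta)=f(\alpha)\neq1$, and $e_f(\alpha^\beta)=\beta\, e_f(\alpha)\geq\beta$. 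Since $f(\alpha)\leq\alpha\ll\beta$, Lemma \ref{lem:list}\ref{T13} gives $f(\alpha)\ll e_f(\alpha^\beta)$.
\end{itemize}

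\textbf{The core step, \ref{E5}$\Rightarrow$\ref{E4}.} Fix $\alpha\models p$ with ${\rm rad}(\alpha)\mid f(\alpha)$ and $f(\alpha)\ll e:=e_f(\alpha)$. Put $\gamma=f(\alpha)$. By definition $\gamma^e\mid\alpha$; write $\alpha=\gamma^e\delta$, and note that every prime of $\delta$ divides $\gamma$.

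The key claim is that $e$ is essentially the minimal exponent: for each prime $\pi\mid\alpha$, the valuation satisfies $v_\pi(\gamma)e\leq v_\pi(\alpha)<v_\pi(\gamma)(e+1)$. Set $d=PE(\alpha)=\gcd_\pi v_\pi(\alpha)$. I will show that $PB(\alpha)$ is small compared to $d$, namely $PB(\alpha)\ll d$. Then $\alpha=PB(\alpha)^{d}$ with $1<PB(\alpha)\ll d$, which is exactly \ref{E4}.

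To bound $PB(\alpha)$, note that its valuations are $v_\pi(\alpha)/d$. The obstacle is that $d$ could be small even when all valuations are huge (e.g.\ the exponents $e$ and $e+1$ are coprime). So this route needs a different choice of witness, and I would instead aim for $\alpha=a^b$ with $b$ not necessarily maximal.

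The natural attempt is to take $b=e$ and $a=\gamma\cdot\delta^{1/e}$, but $\delta$ need not be an $e$-th power. So I expect to need more care. Most plausibly, \ref{E4} should be read with $\alpha^\beta\models p$ only up to $\sim$. Even then, $\alpha$ itself being a pure power $a^b$ with $a\ll b$ seems necessary, because that is a standard-set property of the ultrafilter.

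So I would test whether ${\rm rad}$-high exponentiality forces $PE(\alpha)\gg PB(\alpha)$. Here I would use the function $g$ in the definition: since the valuations of $\alpha$ lie in the short windows $[v_\pi(\gamma)e,\,v_\pi(\gamma)(e+1))$, this is the main obstacle. I suspect one must combine the fact that $\delta\ll\gamma^e$ (in fact $\delta\leq\gamma^{v}$ with $v$ bounded by a function of $\gamma$) with saturation. The goal would be to show that $\{n: PB(n)\ll PE(n)\}$-type sets belong to $p$, or to find the correct intermediate condition.

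\textbf{\ref{E4}$\Leftrightarrow$\ref{E3}.} Given \ref{E4}, the implication \ref{E4}$\Rightarrow$\ref{E1} already shows that $p$ is $PB$-highly exponential, since $PB(p)\neq1$. Conversely, if $PB(\alpha)\ll PE(\alpha)=e_{PB}(\alpha)$, then $\alpha=PB(\alpha)^{PE(\alpha)}$ witnesses \ref{E4} directly, with $PB(\alpha)>1$ because $\alpha$ is nonstandard. This gives \ref{E3}$\Rightarrow$\ref{E4}, and I would reroute the cycle through \ref{E3}. The remaining hard step is then \ref{E2}$\Rightarrow$\ref{E3}, the comparison between ${\rm rad}$ and $PB$ discussed above; I expect it to be the main difficulty.
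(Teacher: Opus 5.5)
There is a genuine gap, and it cannot be closed. You never prove the passage from (ii) or (iii) to (iv) or (v), and that implication is false. The obstruction you isolated yourself (a huge minimal exponent together with coprime exponents such as $e$ and $e+1$) is already a counterexample.

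Take $\eta\in{}^{\ast}\N\setminus\N$, $\xi=2^{\eta}3^{\eta+1}$ and $p=p_{\xi}$.
\begin{itemize}
\item By transfer ${\rm rad}(\xi)=6$ and $e_{\rm rad}(\xi)=\eta$. So $g(6)<\eta$ for every $g:\N\rightarrow\N$, and $p$ is ${\rm rad}$-highly exponential. Hence (ii) and (iii) hold.
\item On the other hand $\gcd(\eta,\eta+1)=1$, so $\xi$ is not a perfect power: $PB(\xi)=\xi$ and $e_{PB}(\xi)=PE(\xi)=1$. Since $PB$ is factor-picking with $PB(p)\neq 1$, $p$ is neither $PB$-highly exponential nor highly exponential, so (iv) and (i) fail.
\item (v) fails too. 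Every generator of $p$ lies in ${}^{\ast}\{2^{n}3^{n+1}\mid n\in\N\}$, and no element of this set is a $b$-th power for any $b\geq 2$. Alternatively, (v) implies (i), which fails.
\end{itemize}
You were right that this is where the difficulty sits. The correct conclusion, though, is that the step is false, not that it needs more care.

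What is actually true is (i)$\Leftrightarrow$(iv)$\Leftrightarrow$(v)$\Rightarrow$(ii)$\Leftrightarrow$(iii), with the last arrow not reversible. The arrows you did prove are correct: (i)$\Rightarrow$(iii)$\Rightarrow$(ii), (v)$\Rightarrow$(i), and (iv)$\Leftrightarrow$(v). The arrow (ii)$\Rightarrow$(iii) also holds, since ${\rm rad}(\alpha)\mid f(\alpha)$ gives ${\rm rad}(\alpha)\leq f(\alpha)\ll e_f(\alpha)\leq e_{\rm rad}(\alpha)$.

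The paper's argument for (iii)$\Rightarrow$(iv) does not rescue the missing step. It records ${\rm rad}(\alpha)\leq PB(\alpha)$ and $e_{\rm rad}(\alpha)\geq PE(\alpha)$ and invokes Lemma \ref{lem:list}.\ref{T13}. To conclude $PB(\alpha)\ll PE(\alpha)$, that lemma needs the opposite inequalities $PB(\alpha)\leq{\rm rad}(\alpha)$ and $e_{\rm rad}(\alpha)\leq PE(\alpha)$. The example above shows that no argument can work.

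Two minor points:
\begin{itemize}
\item Your \emph{key claim} that $v_\pi(\alpha)<v_\pi(\gamma)(e+1)$ for every prime $\pi$ is false; maximality of $e$ gives it only for some prime.
\item In (v)$\Rightarrow$(i) you should rely only on $e_f(\alpha^{\beta})\geq\beta$, which holds since $f(\alpha)^{\beta}\mid\alpha^{\beta}$. The equality $e_f(\alpha^{\beta})=\beta\, e_f(\alpha)$ can fail: for $f(n)=4$ if $PB(n)=24$ and $f(n)=1$ otherwise, $e_f(24)=1$ but $e_f(24^{2})=3$.
\end{itemize}
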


\begin{proof} 
\ref{E1}$\Rightarrow$\ref{E2}$\Rightarrow$\ref{E5} and \ref{E3}$\Rightarrow$\ref{E4} are obvious.

\ref{E2}$\Rightarrow$\ref{E3} Let $\alpha\in\mu(p)$ be such that ${\rm rad}(\alpha)\ll e_{{\rm rad}}(\alpha)$. For all $n\in\N$ ${\rm rad}(n)\leq PB(n)$ and $e_{{\rm rad}}(n)\geq PE(n)$. By transfer, ${\rm rad}(\alpha)\leq PB(\alpha)$ and $e_{\rm rad}(\alpha)\geq PE(\alpha)$, and we conclude by Lemma \ref{lem:list}.\ref{T13}.

\ref{E5}$\Rightarrow$\ref{E2} Let $f$ be factor-picking such that ${\rm rad}(\alpha)\mid f(\alpha)$ for some $\alpha\models p$, so $\{n\in\N\in{\rm rad}(n)\mid f(n)\}\in p$. Hence $\{n\in\N:e_{\rm rad}(n)\geq e_f(n)\}\in p$ and consequently $e_{\rm rad}(\alpha)\geq e_f(\alpha)$. Thus $f(\alpha)\ll e_{f}(\alpha)$ implies ${\rm rad}(\alpha)\ll e_{\rm rad}(\alpha)$.

\ref{E4}$\Rightarrow$\ref{E1} Let $f$ be a factor-picking function. $f(\alpha^{\beta})=f(\alpha)\ll\beta$ by hypothesis, whilst $e_{f}(\alpha^{\beta})=\beta\cdot e_{f}(\alpha)\geq \beta$. We conclude again by Lemma \ref{lem:list}.\ref{T13}.
\end{proof}

In general, being $f$-highly exponential forces restrictions on factorizations.

\begin{pp}\label{highexp}
Let $\alpha=\prod_{i\leq z}p_i^{k_i}$. Let $f$ be a factor-picking function and $e_{f}$ its exact exponent. If $f(\alpha)\ll e_{f}(\alpha)$ then, for all $i\leq z$ such that $p_{i}$ divides $f(\alpha)$, $p_i\ll k_i$.
\end{pp}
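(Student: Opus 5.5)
Write $f(\alpha)=\prod_{i\in I}p_i^{a_i}$, where $I\subseteq\{1,\ldots,z\}$ is the (internal) set of indices with $p_i\mid f(\alpha)$. By transfer of the definition of $e_f$ (here $f(\alpha)\neq1$, since otherwise $e_f(\alpha)=1$ and $f(\alpha)\ll e_f(\alpha)$ fails), $e_f(\alpha)$ is the largest $m$ with $f(\alpha)^m\mid\alpha$. Comparing prime exponents, this gives, for every $i\in I$,
\[ a_i\, e_f(\alpha)\le k_i, \qquad\text{hence}\qquad e_f(\alpha)\le k_i. \]
So for each $i\in I$ we have the chain $p_i\le f(\alpha)\ll e_f(\alpha)\le k_i$, because $p_i$ divides $f(\alpha)$.

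I would then conclude with Lemma \ref{lem:list}.\ref{T13}, applied with $p_i\le f(\alpha)\ll e_f(\alpha)\le k_i$, which yields $p_i\ll k_i$.

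The only delicate point is that $p_i$ and $k_i$ are hyperfinite-indexed elements of ${}^*\N$ and not standard-definable from $\alpha$ by a single standard function. However, Lemma \ref{lem:list}.\ref{T13} is stated for arbitrary elements of ${}^*\N$: it only needs the inequalities $p_i\le f(\alpha)$ and $e_f(\alpha)\le k_i$, which hold internally by transfer. So no uniformity in $i$ is needed, and the argument works for each $i$ separately. The main thing to verify carefully is the transferred inequality $e_f(\alpha)\le k_i$, which follows because $f(\alpha)^{e_f(\alpha)}\mid\alpha$ and $p_i\mid f(\alpha)$ force $p_i^{e_f(\alpha)}\mid\alpha$.
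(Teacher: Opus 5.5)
Your proof is correct and matches the paper's argument: the chain $p_i\le f(\alpha)\ll e_f(\alpha)\le k_i$, closed off with Lemma \ref{lem:list}.\ref{T13}. You also spell out why $e_f(\alpha)\le k_i$ holds (from $p_i^{e_f(\alpha)}\mid f(\alpha)^{e_f(\alpha)}\mid\alpha$), which the paper leaves implicit.
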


\begin{proof} For all $i\leq z$ such that $p_{i}$ divides $f(\alpha)$ we have $p_{i}\leq f(\alpha)$ and similarly $k_{i}\geq e_{f}(\alpha)$; thus, $p_{i}\leq f(\alpha)\ll e_{f}(\alpha)\leq k_{i}$, from which $p_{i}\ll k_{i}$ follows immediately.\end{proof}

\begin{ex}\label{exweakhe}
The reverse implication does not hold, in the following sense. Take $p_1,p_2$ infinite nonstandard primes, and $k_1,k_2\in\,^{\ast}\mathbb{N}$ such that $p_1\ll k_1<p_2\ll k_2$. If $\alpha=p_1^{k_{1}}p_{2}^{k_{2}}$, then ${\rm rad}(\alpha)=p_1p_2$ and $e_{\rm rad}(\alpha)=k_1$, so ${\rm rad}(\alpha)>e_{\rm rad}(\alpha)$.
\end{ex}

To better undestand highly exponential ultrafilters, we need a characterization of which pairs $(\alpha,\beta),(\alpha,\beta^{\prime})$ are such that $\alpha^{\beta}\sim\alpha^{\beta^{\prime}}$. This has been done in \cite[Lemma 22]{LMMRS}. Stated in our current language, with $A=\{\alpha\}$, \cite[Lemma 22]{LMMRS} says the following:

\begin{lm}\label{FundRemCut}
If $\beta=f(\alpha)$ for some $f:\N\rightarrow \N$, then $\alpha^{\beta}\sim \alpha^{\beta^{\prime}}$ if and only if $\beta=\beta^{\prime}$.

If $\beta\neq f(\alpha)$ for all $f:\N\rightarrow\N$, let $F_{\beta}=\{f:\N\rightarrow \N\mid f(\alpha)<\beta\}$ and let $I_{\beta}=\{\gamma\in\,^{\ast}\N\mid \forall f\in F_{\beta}\ \forall g\in F_{\beta}^{c}\ f(\alpha)<\gamma<g(\alpha)\}$. Then:
\begin{enumerate}[label=(\roman*)]
\item $\alpha^{\beta}\sim \alpha^{\beta^{\prime}}\Rightarrow F_{\beta}=F_{\beta^{\prime}}$;
\item the set $\{\beta^{\prime}\in\,^{\ast}\N\mid \alpha^{\beta}\sim \alpha^{\beta^{\prime}}\}$ is coinitial and cofinal in $I_{\beta}$.
\end{enumerate}
\end{lm}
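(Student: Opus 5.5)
The plan is to reduce everything to the pair of coordinates $(PB,PE)$, which encode an element of $^{\ast}\N$ completely, and then to exploit the way $\beta$ sits relative to the functions $g(\alpha)$, $g:\N\rightarrow\N$. Since the statement is \cite[Lemma 22]{LMMRS} specialized to $A=\{\alpha\}$, I will follow that route but phrase each step with the tools of Lemma \ref{lem:list}.

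\textbf{Reduction.} For every $n>1$ and $m\in\N$ we have $PB(n^{m})=PB(n)$ and $PE(n^{m})=m\cdot PE(n)$, so by transfer $PB(\alpha^{\beta})=PB(\alpha)$ and $PE(\alpha^{\beta})=\beta\cdot PE(\alpha)$. Hence, by Lemma \ref{lem:list}.\ref{T1}, $\alpha^{\beta}\sim\alpha^{\beta'}$ forces
\[
(PB(\alpha),\beta\cdot PE(\alpha))\sim(PB(\alpha),\beta'\cdot PE(\alpha)).
\]
Conversely, the map $(a,e)\mapsto a^{e}$ is standard. So the question becomes whether the exponent $\beta$ is ``seen'' by the standard sets together with the base $PB(\alpha)$.

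\textbf{First case: $\beta=f(\alpha)$.} Here $\alpha^{\beta}=h(\alpha)$ for the standard function $h(n)=n^{f(n)}$. I plan to recover $\beta'$ from $\alpha^{\beta'}$ through the standard function $k(m)=PE(m)/PE(PB\text{-root data})$, evaluated along the fixed base. The idea is to show that the set
\[
\{m:\ m=b^{f(b')}\ \text{with } b' \text{ determined by } m\}
\]
pins down the exponent. Then I will apply Lemma \ref{lem:list}.\ref{T3} to a function sending $\alpha^{\beta'}$ back to the corresponding element, and conclude $\beta'=f(\alpha)$.

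\textbf{Second case: $\beta$ is not of the form $f(\alpha)$.} For (i), take $g\in F_{\beta}$. The standard set $\{n^{m}: m>g(n)\}$, suitably coded via $PB,PE$, contains $\alpha^{\beta}$, hence also $\alpha^{\beta'}$. Decoding with $PB$ and $PE$ then gives $g(\alpha)<\beta'$, and symmetrically for $g\notin F_{\beta}$; thus $F_{\beta}=F_{\beta'}$. For (ii), I will first note that $I_{\beta}$ contains $\beta$. I will then use the $2^{|\beta\N|^{+}}$-saturation of $^{\ast}\N$: for any $\gamma\in I_{\beta}$, the type
\[
\{\alpha^{x}\in{}^{\ast}A: A\in p_{\alpha^{\beta}}\}\cup\{x<\gamma\}\cup\{x>g(\alpha): g\in F_{\beta}\}
\]
is finitely satisfiable, and a realization gives a $\beta'$ below $\gamma$ with $\alpha^{\beta'}\sim\alpha^{\beta}$. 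Cofinality is symmetric.

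\textbf{Main obstacle.} The hard step is the finite satisfiability in (ii), together with the decoding in the first case. In both, the relation $\sim$ only speaks about the single element $\alpha^{\beta'}$, not about the pair $(\alpha,\alpha^{\beta'})$, so I must make sure the standard sets used genuinely recover $\alpha$ (via $PB$) and compare the exponent against $g(\alpha)$ intrinsically.

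For finite satisfiability, given $A\in p_{\alpha^{\beta}}$ and finitely many $g_{1},\dots,g_{k}\in F_{\beta}$, I would try to check by transfer the internal statement ``some $x$ with $\max_{i}g_{i}(\alpha)<x<\gamma$ has $\alpha^{x}\in{}^{\ast}A$''. The point is to witness it using that $\beta$ itself satisfies the analogous statement with $\gamma$ replaced by every $g(\alpha)$, $g\notin F_{\beta}$, and to push this to $\gamma$ via overspill. This overspill step is where I expect the real difficulty, and where the precise reading of $\sim$ in the statement matters.
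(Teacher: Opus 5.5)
The paper gives no proof of this lemma (it is quoted from Lemma 22 of [LMMRS]), so your sketch has to stand on its own. It has two gaps.

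\textbf{First case and (i).} Both steps need $\alpha$ itself to be recoverable from $\alpha^{\beta'}$. Your reduction only recovers $PB(\alpha)$ and the product $\beta'\cdot PE(\alpha)$, and the decoding function you describe is never actually defined. It cannot be, because for perfect powers both claims are false. Let $\alpha=2^{e}$ with $e$ infinite, pick $N\sim e$ with $N>e$, and let $f(n)=(PE(n)-1)!$, so that $f(\alpha)=(e-1)!$. Applying $x\mapsto 2^{x!}$ to $e\sim N$ gives
\[
\alpha^{f(\alpha)}=2^{e!}\sim 2^{N!}=\alpha^{N!/e},\qquad N!/e\neq (e-1)!,
\]
which contradicts the first case. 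If $N$ is moreover taken above every $g(\alpha)$ (by saturation), then $\beta=N!/e$ is not of the form $g(\alpha)$ and $\alpha^{\beta}\sim\alpha^{(e-1)!}$. The function $g(n)=PE(n)!$ then lies in $F_{\beta}\setminus F_{(e-1)!}$, which contradicts (i). The missing hypothesis is that $\alpha>1$ is not a perfect power; this is how the paper uses the lemma (with ${\rm rad}(\gamma_{2})$ in the proof of Theorem \ref{alphthm:A}, and in Section \ref{sec:genexp}). Under that hypothesis your reduction finishes both steps at once. Since $PB(\alpha)=\alpha$ and $PE(\alpha)=1$, you get $(\alpha,\beta)\sim(\alpha,\beta')$. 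The graph of $f$ then gives $\beta'=f(\alpha)=\beta$, and the sets $\{(x,y):g(x)<y\}$ give $F_{\beta}=F_{\beta'}$.

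\textbf{Finite satisfiability in (ii).} Your type is the right one, but you leave open exactly the step that carries the content, and overspill is not what closes it. The missing idea is a definable least witness. Given $A\in p_{\alpha^{\beta}}$ and $g=\max_i g_i\in F_{\beta}$, let $m(n)=\min\{x>g(n): n^{x}\in A\}$, with $m(n)=1$ if no such $x$ exists. Since $\beta$ is such an $x$ for $n=\alpha$, $m(\alpha)\le\beta$. Since $\beta$ is not a function of $\alpha$, in fact $m(\alpha)<\beta$, i.e.\ $m\in F_{\beta}$. Hence $g(\alpha)<m(\alpha)<\gamma$ for every $\gamma\in I_{\beta}$, while $\alpha^{m(\alpha)}\in{}^{\ast}A$. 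For cofinality, take $g=\min_i g_i\notin F_{\beta}$, so $g(\alpha)>\beta$, and use $M(n)=\max\{x<g(n): n^{x}\in A\}$. Then $M(\alpha)\ge\beta$, hence $M(\alpha)>\beta$, hence $M\notin F_{\beta}$ and $M(\alpha)>\gamma$. This argument never recovers $\alpha$ from $\alpha^{x}$. So your assessment of where the difficulty lies is inverted: (ii) is routine, whereas the ``decoding'' steps are precisely where a hypothesis on $\alpha$ is indispensable.
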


In particular, when $\alpha\ll\beta$ we have that $\alpha^{\beta}\sim\alpha^{\beta^{\prime}}$ for arbitrarily large $\beta^{\prime}$. By Lemma \ref{lem:list}.\ref{T11}, all ultrafilters generated by some $\alpha^{\beta}$ with $(\alpha,\beta)$ tensor are highly exponential, but the reverse implication does not hold. However, highly exponential ultrafilters have a nice behaviour with respect to divisibility between ultrafilters. We recall the nonstandard definition, that is more suited for our present approach.

\begin{de}
Let $p,q\in\beta\N$. We say that:
\begin{enumerate}
\item[(a)] $p$ divides $q$ if there exist $\alpha\in\mu(p),\beta\in\mu(q)$ such that $\alpha|\beta$;
\item[(b)] $p$ strictly divides itself if there exist distinct $\alpha,\beta\in\mu(p)$ such that $\alpha|\beta$.
\end{enumerate}
\end{de}

A saturation argument easily shows that $p$ divides $q$ iff for every $\alpha\in\mu(p)$ there exists $\beta\in\mu(q)$ such that $\alpha|\beta$ iff for every $\beta\in\mu(q)$ there exists $\alpha\in\mu(p)$ such that $\alpha|\beta$, see \cite[Theorem 3.4]{So5}.

In standard terms, $p$ strictly dividing itself is equivalent to say that $p$ contains an antichain for the division; see \cite[Proposition 7]{LMMRS} for details\footnote{The reference talks about types and orders; to apply it here, just observe that an ultrafilter over $\N$ can be identified with a complete type in the complete language over $\N$, and that divisibility is an order.}. We make more precise the relationship between highly exponential ultrafilters and divisibility by proving our Theorem \ref{alphthm:A}

\begin{proof}[Proof of Theorem \ref{alphthm:A}] (i) Let ${p}$ be highly exponential and generated by $\gamma_{2}$, and let $q$ be generated by $\gamma_{1}=\prod_{i\leq z}p_i^{k_i}$, with ${\rm rad}(\gamma_{1})=\eta_{1} | \eta_{2} ={\rm rad}(\gamma_{2})$. Since an internal set bounded from above has a maximum, let $\mu_1=\max\{k_i:i\leq z\}$, so $\gamma_{1} | \eta_{1}^{\mu_{1}}$. By Lemma \ref{FundRemCut}, pick $\mu_{2}>\mu_{1}$ with $\eta_{2}^{\mu_{2}}\sim\gamma_{2}$. With this choice we have that $\eta_{1}^{\mu_{1}}|\eta_{2}^{\mu_{2}}$, hence $\gamma_{1}| \eta_{2}^{\mu_{2}}$, as desired.

(ii) By contrast, assume that $p$ is not highly exponential. Let $\alpha\models PB(p),\alpha^{\beta}\models p$ and let $f:\N\rightarrow \N$ be such that $\beta<f(\alpha)$. Let $h$ be the function mapping $n$ to the maximal exponent appearing in the prime factorization of $n$, and let $g=h\cdot f$. In particular, $\beta<g(\alpha)$. Let $q$ be such that ${\rm rad}(\alpha)^{g(\alpha)}\models q$. Then ${\rm rad}(p)={\rm rad}(q)$, hence by hypothesis some generator $\delta$ of $q$ divides $\alpha^{\beta}$. But generators of $q$ have the form ${\rm rad}(\gamma)^{g(\gamma)}$ for some $\gamma\sim \alpha$, and ${\rm rad}(\gamma)^{g(\gamma)}|\alpha^{\beta}$ entails that ${\rm rad}(\gamma)|\alpha^{\beta}$. As all primes in the factorization of ${\rm rad}(\gamma)$ have exponent $1$, this entails that ${\rm rad}(\gamma)|{\rm rad}(\alpha)$; since ${\rm rad}(p)$ does not strictly divide itself, this forces $\gamma=\alpha$. Hence ${\rm rad}(\alpha)^{g(\alpha)}|\alpha^{\beta}$, which is impossible as it would give $g(\alpha)=h(\alpha)\cdot f(\alpha)\leq h(\alpha)\cdot \beta$, hence $f(\alpha)\leq\beta$.\end{proof}

\begin{co}  Let $p\in\beta\N$. If $rad(p)$ does not strictly divide itself then $p$ is highly exponential if and only if, for all $q\in \beta\mathbb{N}\setminus\{1\}$, if $rad(q)=rad(p)$ then $q$ divides $p$.
\end{co}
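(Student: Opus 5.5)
The corollary should follow by combining the two parts of Theorem \ref{alphthm:A}, so the plan is to prove the two implications separately, each by one of those parts. Throughout, $p$ is fixed with ${\rm rad}(p)$ not strictly dividing itself.

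For the forward implication, assume $p$ is highly exponential and let $q\in\beta\N\setminus\{1\}$ satisfy ${\rm rad}(q)={\rm rad}(p)$. Equality trivially gives that ${\rm rad}(q)$ divides ${\rm rad}(p)$: pick $\gamma\models q$; then ${\rm rad}(\gamma)$ generates ${\rm rad}(p)$, and ${\rm rad}(\gamma)\mid{\rm rad}(\gamma)$. To apply Theorem \ref{alphthm:A}(i) we also need $p\neq 1$. This holds because highly exponential ultrafilters are nonprincipal by Definition \ref{def:fp}. Theorem \ref{alphthm:A}(i) then yields that $q$ divides $p$. This direction does not even use the hypothesis on ${\rm rad}(p)$.

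For the reverse implication, assume that every $q\neq 1$ with ${\rm rad}(q)={\rm rad}(p)$ divides $p$. This is the hypothesis of Theorem \ref{alphthm:A}(ii), except that the theorem quantifies over all $q$ with ${\rm rad}(q)={\rm rad}(p)$. The only extra case is $q=1$, and it is harmless since $1$ divides everything. Together with the assumption that ${\rm rad}(p)$ does not strictly divide itself, Theorem \ref{alphthm:A}(ii) gives that $p$ is highly exponential.

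The only step needing care is the bookkeeping about principal ultrafilters, and $p=1$ in particular. If $p=1$, then ${\rm rad}(p)=1$, and every $q$ with ${\rm rad}(q)=1$ equals $1$. So the divisibility condition holds vacuously over $q\neq 1$, while $p$ is not highly exponential, being principal. The equivalence therefore fails for $p=1$, and I would read the corollary, as in Theorem \ref{alphthm:A}, with $p\in\beta\N\setminus\{1\}$ implicit. More generally, both notions in Definition \ref{def:fp} are stated for nonprincipal $p$, so I would restrict to $p\in\beta\N\setminus\N$. The case of principal $p\neq 1$ would be checked directly. For such $p=n$, take $q=n^2$ in the principal case, or a nonprincipal $q$ generated by ${\rm rad}(n)^{\beta}$ with $\beta$ infinite. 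Then ${\rm rad}(q)={\rm rad}(n)={\rm rad}(p)$, but $q$ cannot divide $n$. So the condition fails, consistently with $p$ not being highly exponential.
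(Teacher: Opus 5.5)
Your proof is correct and follows the paper's route: the paper just says the corollary is immediate from Theorem~\ref{alphthm:A}, with part (i) giving the forward direction (since ${\rm rad}(q)={\rm rad}(p)$ trivially gives ${\rm rad}(q)\mid{\rm rad}(p)$) and part (ii) giving the converse. Your point that $p=1$ must be excluded, as Theorem~\ref{alphthm:A}'s standing assumption $p\in\beta\N\setminus\{1\}$ implicitly does, is a fair correction to the statement as written. This does rest on reading ``highly exponential'' as restricted to nonprincipal ultrafilters, as you do: taken literally, the definition is vacuously satisfied by $p=1$, because $f(1)=1$ for every factor-picking $f$.
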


\begin{proof} This is immediate from Theorem \ref{alphthm:A}.\end{proof}

\begin{re} If ${\rm rad}(p)$ strictly divides itself, the implication $(ii)\Rightarrow (i)$ is, in general, false. Let $p_{i}$ be the increasing enumeration of the primes, and let $p$ be generated by $\eta=p_{1}^{\alpha} \cdot p_{2}^{\alpha-1}\cdot\ldots\cdot p_{\alpha}$ for some infinite $\alpha$; clearly, ${\rm rad}(p)$ strictly divides itself. Notice that $\eta=f(\alpha)$ for $f$ a standard function; moreover, $p$ is not highly exponential (in fact, quite the opposite is true, as $PB(p)=p$). However, $p$ is maximal with respect to the divisibility. In fact, pick any $q\in\beta\N$ and let $\beta$ be a generator of $q$. Pick $\gamma\sim \alpha$ with $(\beta,\gamma)$ tensor. Then it is immediate to see that $\beta|p_{1}^{\gamma}\cdot\ldots\cdot p_{\gamma}=f(\gamma)\sim f(\alpha)$.
\end{re}

\section{Two exponentiations}\label{sec:twoexp}

In this section, we focus on the functions $E_{1}(p,q), E_{2}(p,q)$ introduced in \cite{LB} and recalled here in Definition \ref{def:exp}. Notice that, by \cite[Theorem 4.1]{HS}, for all $q\in\beta\N$ the functions ${p}\mapsto E_2({p},{q})$ and ${p}\mapsto E_1({p},{q})$ are continuous. When $p$ or $q$ are principal, it is immediate to notice that $E_{1}(p,q)=E_{2}(p,q)$; therefore, for $n\in\N$ we will write $p^n$ instead of $E_1(p,n)=E_2(n,p)$ and $n^p$ instead of $E_1(n,p)=E_2(p,n)$.

\begin{re}
It follows directly from the definition that $E_1({p},{q})$ and $E_2({p},{q})$ respectively are images of the tensor product ${p}\otimes{q}$ via extensions to $\beta(\mathbb{N}^2)$ of functions $(n,m)\mapsto n^m$ and $(n,m)\mapsto m^n$. Hence, for $(\alpha,\beta)\in\mu(p\otimes q)$ we have that $\alpha^{\beta}\in \mu(E_{1}(p,q))$, whilst $\beta^{\alpha}\in\mu (E_{2}(p,q))$.
\end{re}

In \cite{LB}, several properties of $E_{1},E_{2}$ were studied, and many algebraic questions about solvability of equations involving $E_{1},E_{2}$ were asked. We rephrase here one of those questions, which was the origin of our research.

\begin{pb}\label{FundProb}
Characterize nonprincipal ultrafilters $p,q$ for which 
\begin{equation}\label{FundEq} E_1({p},{q})=E_2({q},{p}). \end{equation} 
Equivalently, characterize tensor pairs $(\alpha_{1},\beta_{1}),(\beta_{2},\alpha_{2})$ in $^{\ast}\N^{2}$ for which $\alpha_{1}\sim\alpha_{2},\beta_{1}\sim\beta_{2},\alpha_{1}^{\beta_{1}}\sim\alpha_{2}^{\beta_{2}}$.
\end{pb}

\medskip

{\bfseries Convention:} From now on, in this section $p,q$ will denote nonprincipal ultrafilters, and all nonstandard numbers we write will be tacitly assumed to be in $^{\ast}\N\setminus\N$.

\medskip

As proven in \cite{LB}, under this additional request the desired equality (\ref{FundEq}) might hold (see \cite[Example 2.15]{LB}) or it might fail (see \cite[Example 2.11]{LB}). Let us begin our study of Problem $\ref{FundProb}$ by providing a very large class of examples of $E_{1}(p,q)\neq E_{2}(q,p)$.

\begin{ex}
Let $p,q$ be nonprincipal ultrafilters, with $p$ containing the complement of the set of perfect powers. Let $(\alpha_{1},\beta_{1})\in \mu(p\otimes q), (\beta_{2},\alpha_{2})\in\mu(q\otimes p)$. As $\alpha_{1},\alpha_{2}$ are not perfect powers, we have that $\alpha_{1}=PB(\alpha_{1}^{\beta_{1}})$, $\alpha_{2}=PB(\alpha_{2}^{\beta_{2}})$. If 
\[X=\{n\in\N\mid PB(n)<PE(n)\},\]
we have that $\alpha_{1}^{\beta_{1}}\in\,^{\ast}X$, whilst $\alpha_{2}^{\beta_{2}}\in\,^{\ast}X^{c}$. Hence $E_{1}(p,q)\neq E_{2}(q,p)$.
\end{ex}

By the above example, solutions of $E_{1}(p,q)=E_{2}(q,p)$ force the set of perfect powers to belong to $p$; e.g., any ultrafilter $p$ containg the set of primes satisfies $E_{1}(p,q)\neq E_{2}(q,p)$ for all $q\in\beta\N$.

In the only known solutions of Equation (\ref{FundEq}), $p$ has the form $p=n^{p_{1}}$ with $n\in\N_{\geq 2}$ and $p_{1}\in\beta\N\setminus\N$ such that $p_{1}\odot q=q\odot p_{1}$ (for details, see \cite[Example 2.15]{LB}). Our Theorem \ref{alphthm:B} proves that arbitrary solutions of Equation (\ref{FundEq}) must satisfy similar strict restrinctions.

\begin{proof}[Proof of Theorem \ref{alphthm:B}] Let $\alpha$ be a generator of $p$. Let $\beta_{1}<\beta_{2}$ be generators of $q$ with $(\beta_{1},\alpha),(\alpha,\beta_{2})$ tensor pairs, so that $\alpha^{\beta_{2}}$ generates $E_1({p},{q})$ and $\alpha^{\beta_{1}}$ generates $E_{2}(q,p)$. Notice that $f(\alpha^{\beta_{1}})=f(\alpha)=f(\alpha^{\beta_{2}})$, whilst $e_f(\alpha^{\beta_1})=e_f(\alpha)\beta_1$, $e_{f}(\alpha^{\beta_{2}})=e_{f}(\alpha)\beta_{2}$. As $E_1({p},{q})=E_2({q},{p})$, we have that $\alpha^{\beta_{1}}\sim\alpha^{\beta_{2}}$. Applying the function $(f,e_{f})$ to $\alpha^{\beta_{1}}$ and $\alpha^{\beta_{2}}$, we deduce by Lemma \ref{lem:list}.\ref{T1} that 
\begin{equation}\label{eq: paireq}\tag{$\diamond$} (f(\alpha),e_{f}(\alpha)\beta_{1})\sim (f(\alpha),e_{f}(\alpha)\beta_{2}).\end{equation}
(i) By Lemma \ref{lem:list}.\ref{T10}, $(f(\alpha),\beta_{2})$ is a tensor pair, hence $f(\alpha)\ll \beta_{2}$. By Lemma \ref{lem:list}.\ref{T12}, it follows that $f(\alpha)\ll e_{f}(\alpha)\beta_{2}$. By (\ref{eq: paireq}), it follows that $f(\alpha)\ll e_{f}(\alpha)\beta_{1}$. But $(\beta_{1},f(\alpha))$ is a tensor pair by Lemma \ref{lem:list}.\ref{T10}, hence $f(\alpha)\not\ll \beta_{1}$. Again from Lemma \ref{lem:list}.\ref{T12} we deduce hence that $f(\alpha)\ll e_{f}(\alpha)$. 

(ii) By (\ref{eq: paireq}), $e_{f}(\alpha)\beta_{1}\sim e_{f}(\alpha)\beta_{2}$. We conclude as, by Lemma \ref{lem:list}.\ref{T10}, $(e_{f}(\alpha),\beta_{2}),(\beta_{1},e_{f}(\alpha))$ are tensor pairs, hence $e_{f}(\alpha)\beta_{2}$ generates $e_{f}(p)\odot{q}$ whilst $\beta_{1}e_{f}(\alpha)$ generates ${q}\odot e_{f}({p})$. \end{proof}

\begin{ex} Assume that $E_1({p},{q})=E_2({q},{p})$. By Theorem \ref{alphthm:B}, $p$ is $f$-highly exponential for all factor-picking $f$. Let $\alpha=\prod_{i=1}^{\nu}p_{l(i)}^{k_{i}}$ be a generator of $p$, where $p_{l(i)}$ is the $l(i)$-th prime in $^{\ast}\N$ (hence $k_{i}>0$ for all $i\leq \nu$). In particular:
\begin{enumerate}
\item $PB(\alpha)\ll PE(\alpha)$, hence by Lemma \ref{highexp} $p_{l(i)}\ll k_{i}$ for all $i\leq \nu$;
\item if $\nu=m\in\N$, by taking for $j\leq m$ the factor-picking $f_{j}$ that maps $n$ to its $j$-th prime divisor, we deduce that $e_{f_{j}}(p)\odot q=q\odot e_{f_{j}}(p)$, namely $q$ commutes with all the ultrafilters generated by exponents of prime factorization of a generator of $p$.
\end{enumerate}
\end{ex}

The necessary conditions in Theorem \ref{alphthm:B} are independent, and neither of them suffices to prove that $E_1({p},{q})=E_2({q},{p})$, as shown in the following example.

\begin{ex} Let us show that $(1), (2)$ from Theorem \ref{alphthm:B} are independent.

\begin{enumerate}
\item It is known (see \cite[Theorem 6.10]{HS}) that the center of $(\beta\N,\odot)$ is $\N$. Pick $q$ nonprincipal, let $u$ be such that $u\odot q\neq q\odot u$ and let $p=2^{u}$. Then $p$ is $f$-highly exponential for all factor-picking $f$ with $f(2)\neq 1$, as in this case necessarily $f(2)=2$, whilst $e_{f}(p)=u$. Hence $e_{f}(p)\odot q=u\odot q\neq q\odot u=q\odot e_{f}(p)$ (and, in fact, $E_{1}(p,q)=2^{u\odot q}\neq 2^{q\odot u}= E_{2}(q,p)$).
\item Now let $p$ be generated by an infinite prime $\alpha$. Trivially, $e_{f}(\alpha)=1$ for all $f$ factor-picking, hence $e_{f}(p)\odot q=q\odot e_{f}(p)$ for all $q\in\beta\N$. But $p$ is not $f$-highly exponential, hence $E_{1}(p,q)\neq E_{2}(q,p)$ for all $q\in\beta\N\setminus\N$.
\end{enumerate}
\end{ex}

We do not know if the implication in Theorem \ref{alphthm:B} can be reversed. We will discuss this in Section \ref{sec:conclusions}.

\section{General exponentiation of ultrafilters}\label{sec:genexp}

Exponentiation of ultrafilters can be seen from a more general point of view\footnote{The same would be true for several other operations, but it is outside the scope of the present paper.}. In fact, let $E:\N^{2}\rightarrow \N$ be the function mapping $(x,y)$ into $x^{y}$. Then $E_{1},E_{2}$ are just the restriction of $E$ to, respectively, tensor products and ``switched'' tensor products, namely ultrafilters of the form $s(p\otimes q)$ where $s:\N^{2}\rightarrow \N^{2}$ maps $(x,y)$ into $(y,x)$. 

Therefore, Problem \ref{FundProb} can be generalized to the following.

\begin{pb} Characterize for which ultrafilters $p,q\in\beta\N^{2}$ the equality $E(p)=E(q)$ holds.\end{pb} 

In terms of generators, the problem above can be equivalently rephrased as follows:

\begin{pb}\label{pbcharpair} Characterize pairs $(\alpha_{1},\beta_{1}),(\alpha_{2},\beta_{2})\in\,^{\ast}\N^{2}$ such that $\alpha_{1}^{\beta_{1}}\sim\alpha_{2}^{\beta_{2}}$. \end{pb}

\begin{re}\label{ReformulationRm} By Lemma \ref{lem:list}.\ref{T2}, $\alpha_{1}^{\beta_{1}}\sim\alpha_{2}^{\beta_{2}}$ if and only if there exists $(\alpha_{3},\beta_{3})\sim (\alpha_{2},\beta_{2})$ such that $\alpha_{1}^{\beta_{1}}=\alpha_{3}^{\beta_{3}}$, which is easily settled: there must be some $\gamma\in\,^{\ast}\mathbb{Q}$ such that either
\begin{enumerate}
\item $\alpha_{3}=\alpha_{1}^{\gamma}, \beta_{1}=\beta_{3}\gamma$, or
\item $\alpha_{1}=\alpha_{3}^{\gamma}, \beta_{3}=\beta_{1}\gamma$.
\end{enumerate}
\end{re}

The problem is that, in general, it is not trivial to see if any of the above conditions hold true given the ultrafilters generated by $(\alpha_{1},\beta_{1}),(\alpha_{2},\beta_{2})$. The main problems are that $E(\alpha,\beta)$ can be written as an exponential in many different ways, and that it might not even be possible to recover $\alpha,\beta$ from $E(\alpha,\beta)$.

\begin{ex} Let $\eta\notin er(\mu)$, with $\mu>\eta$. Let $\alpha=\mu^{\eta},\beta=\frac{\mu!}{\eta}$. Then $\alpha\notin er(\alpha^{\beta})$: in fact, $\alpha^{\beta}=\mu^{\mu!}$. As $\mu^{\mu!}$ is clearly a function of $\mu$, $\alpha\in er(\mu^{\mu!})$ would entail $\alpha\in er(\mu)$, namely $\mu^{\eta}\in er(\mu)$, from which clearly $\eta\in er(\mu)$. Notice that $\alpha^{\beta}$ can be written as an exponential in uncountably many ways.

Let us notice, however, that $\alpha\in er(\alpha^{\beta})$ is possible. This happens, e.g., when $\alpha$ is not a perfect power, in which case $\alpha=PB(\alpha^{\beta})$ for all $\beta$. It also happens whenever $f:\mathbb{N}\rightarrow\mathbb{N}$ is such that there is some $k\in\mathbb{N}$ for which $|\{m\in\mathbb{N}:m^{f(m)}=n\}|\leq k$ for all $n\in\mathbb{N}$. In such a case, by transfer we have that for all $\alpha\in\,^{\ast}\N$ $|\{\beta\in\,^{\ast}{\mathbb{N}}:\beta^{f(\beta)}=\alpha\}|\leq k$, hence if we let $g_i:\N\rightarrow\N$ be the function mapping every $n\in\N$ to the $i$-th smallest number $m$ such that $m^{f(m)}=n$, then every $\alpha\in\,^{\ast}{\mathbb{N}}$ is equal to $\,^{\ast} g_i(\alpha^{\,^{\ast} f(\alpha)})$ for some $i$, and so $\alpha\in{\rm er}(\alpha^{\,^{\ast} f(\alpha)})$. This case includes all polynomial functions $f:\mathbb{N}\rightarrow\mathbb{N}$ with coefficients in $\mathbb{Z}$.
\end{ex}

However, as soon as one between $\alpha,\beta$ belongs to $er\left(\alpha^{\beta}\right)$, the other does too.

\begin{lm} Let $\alpha,\beta\in\,^{\ast}\N\setminus\N$. Then $\alpha\in er\left(\alpha^{\beta}\right)$ if and only if $\beta\in er\left(\alpha^{\beta}\right)$.
\end{lm}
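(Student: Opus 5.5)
The lemma is symmetric in spirit, and I would prove each direction by writing down an explicit standard function that recovers the missing argument. Set $\gamma=\alpha^{\beta}$. The key observation is that, for standard $n,m$ with $n\geq 2$, the map $(n,N)\mapsto \log_n N$ is a standard partial function. Likewise, for fixed $m$, the map $(m,N)\mapsto N^{1/m}$ is a standard partial function; here we take the $m$-th root, suitably rounded or set to $1$ when it is not an integer. We then transfer these to $^{\ast}\N$.

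For $(\Rightarrow)$, assume $\alpha=f(\gamma)$ with $f:\N\rightarrow\N$ standard. Define a standard $g:\N\rightarrow\N$ by $g(N)=\max\{m\in\N\mid f(N)^{m}\leq N\}$ when $f(N)\geq 2$, and $g(N)=1$ otherwise. Since $\alpha\in\,^{\ast}\N\setminus\N$ we have $\alpha\geq 2$. Transfer then gives $g(\gamma)=\max\{m\mid \alpha^{m}\leq\alpha^{\beta}\}=\beta$, because $m\mapsto\alpha^{m}$ is strictly increasing for $\alpha\geq 2$. Hence $\beta=g(\gamma)$. As $\beta\notin\N$ by assumption, this gives $\beta\in er(\gamma)$.

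For $(\Leftarrow)$, assume $\beta=f(\gamma)$. Define $g(N)$ to be the unique $a\in\N$ with $a^{f(N)}=N$ if such an $a$ exists, and $1$ otherwise. Uniqueness holds because $a\mapsto a^{k}$ is injective on $\N$ for $k\geq 1$. By transfer, $g(\gamma)$ is the unique $a\in\,^{\ast}\N$ with $a^{\beta}=\alpha^{\beta}$, namely $\alpha$. Since $\alpha\notin\N$, we get $\alpha\in er(\gamma)$.

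There is no real obstacle here. The only points needing care are that the definitions of $g$ are total standard functions $\N\rightarrow\N$, which I handle by the default values, and that the relevant properties (monotonicity of $m\mapsto n^{m}$ for $n\geq 2$, and injectivity of $a\mapsto a^{k}$) transfer to $^{\ast}\N$. The hypothesis $\alpha,\beta\notin\N$ is used only so that the recovered element lies in $er$, which by definition excludes standard values. It also guarantees $\alpha\geq 2$ in the first direction.
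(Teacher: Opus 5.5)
Your proof is correct and follows the paper's argument: in each direction you write down an explicit standard function that recovers the missing exponent (resp.\ base) from $\alpha^{\beta}$, and then transfer it. Your $\max\{m\in\N\mid f(N)^{m}\leq N\}$ is just a variant of the paper's ``unique $m$ with $f(n)^{m}=n$'', and your restriction to $f(N)\geq 2$ in fact repairs the paper's tacit non-uniqueness when $f(n)=1$. The one slip is that for $f(N)\geq 2$ with $f(N)>N$ your set is empty, so $g$ also needs a default value there (e.g.\ $g(N)=1$); this changes nothing at $N=\alpha^{\beta}$, where the set contains $\beta$.
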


\begin{proof} If $\alpha=f(\alpha^{\beta})$, then the set $A=\{n\in\N:(\exists m\in\N)f(n)^m=n\}$ belongs to the ultrafilter generated by $\alpha^{\beta}$. Let $g:\N\rightarrow \N$ be the function mapping $n$ to $1$ if $n\notin A$, and to the unique $m$ such that $n=f(n)^{m}$ if $n\in A$. Then, trivially, $\beta=g(\alpha^\beta)$. The converse is similar.
\end{proof}

As expectable, a case where we have a complete characterization in Problem \ref{pbcharpair} is when some $\alpha_{i}$ or $\beta_{i}$ is standard. In fact, since $(n,\alpha)$ and $(\alpha,n)$ are both tensors whenever $n\in\N$, the following basic properties are consequences of Remark \ref{ReformulationRm}:

\begin{enumerate}
\item $E(n,\alpha)=E(m,\beta)$ if and only if there exists $h\in\mathbb{Q}$ such that $n=m^{h}$ and $\beta=h\alpha$; in particular, $E(n,\alpha)\neq E(m,\alpha)$ for all $n\neq m$;
\item $E(\alpha,n)=E(\beta,m)$ if and only if there exists $h\in\mathbb{Q}$ such that $n=mh$ and $\alpha=\beta^{h}$; in particular, $E(\alpha,n)\neq E(\alpha,m)$ for all $n\neq m$.
\end{enumerate}

More in general, by Remark \ref{ReformulationRm} one can easily deduce characterizations for the cases $E(n,\beta_{1})=E(\alpha_{2},\beta_{2})$ and $E(\alpha_{1},n)=E(\alpha_{2},\beta_{2})$.

\medskip

{\bfseries Convention:} From now on, in this section, whenever we write $\alpha,\beta$ we mean them to be infinite nonstandard natural numbers.

\medskip

As said, in general, given arbitrary $\alpha,\beta$ it is difficult to characterize $E(\alpha,\beta)$ (and its preimage). By Remark \ref{ReformulationRm}, given $\gamma\in\,^{\ast}\N$, the problem of finding all possible pairs $(\alpha,\beta)$ with $E(\alpha,\beta)=\gamma$ can be reduced to the problem of finding all possible $\beta$ such that $\gamma\sim PB(\gamma)^{\beta}$. Therefore, we now reduce to the following problem:

\begin{pb} Let $\alpha$ be a non perfect power. Characterize the pairs $\beta_{1},\beta_{2}$ such that $\alpha^{\beta_{1}}\sim \alpha^{\beta_{2}}$. \end{pb}

Lemma \ref{FundRemCut} tells us that the only case where $\alpha^{\beta}\sim\alpha^{\beta^{\prime}}$ entails $\beta=\beta^{\prime}$ is when $\beta=f(\alpha)$ for some $f:\N\rightarrow\N$. Ultrafilters of this form have the nice property of working as ``cancellative bases'' for exponentiation, in the following sense.

\begin{te}\label{pfree} Let $p$ be generated by $\alpha^{f(\alpha)}$ for some non perfect power $\alpha$. For $i=1,2$ let $\pi_{i}:\N^{2}\rightarrow \N$ be the projection on the $i$-th coordinate. Then, for all distinct $q_{1},q_{2}\in\beta\N^2$ such that $\pi_1(q_1)=\pi_1(q_2)=p$ we have $E(q_{1})\neq E(q_{2})$.
\end{te}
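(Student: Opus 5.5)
We argue by contraposition: assuming $E(q_1)=E(q_2)$, we show $q_1=q_2$. Pick generators $(\gamma_1,\delta_1)\models q_1$ and $(\gamma_2,\delta_2)\models q_2$. Since $\pi_1(q_i)=p$, Lemma \ref{lem:list}.\ref{T1} gives $\gamma_1\sim\gamma_2\sim\alpha^{f(\alpha)}$. By Lemma \ref{lem:list}.\ref{T2} we may replace the generators so that $\gamma_1=\alpha^{f(\alpha)}$ exactly. Write $\varphi(n)=PB(n)^{f(PB(n))}$. Because $\alpha$ is not a perfect power, $PB(\alpha)=\alpha$, so $\varphi(\alpha)=\alpha^{f(\alpha)}$. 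For every $\gamma\sim\alpha^{f(\alpha)}$ this standard property transfers: $PB(\gamma)$ is not a perfect power and $\gamma=PB(\gamma)^{f(PB(\gamma))}$. Consequently, from any $\gamma\sim\alpha^{f(\alpha)}$ the standard functions $PB(\gamma)$ and $PE(\gamma)=f(PB(\gamma))$ recover the base and exponent.

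Next we use $E(q_1)=E(q_2)$, i.e.\ $\gamma_1^{\delta_1}\sim\gamma_2^{\delta_2}$. By Remark \ref{ReformulationRm} (or Lemma \ref{lem:list}.\ref{T4} applied to the map $(x,y)\mapsto x^y$), we can choose $(\gamma_3,\delta_3)\sim(\gamma_2,\delta_2)$ with $\gamma_1^{\delta_1}=\gamma_3^{\delta_3}$. Set $\alpha'=PB(\gamma_3)$, so $\gamma_3=\alpha'^{f(\alpha')}$ with $\alpha'$ not a perfect power. The equality now reads
\[\alpha^{f(\alpha)\delta_1}=\alpha'^{f(\alpha')\delta_3}.\]
Since both $\alpha$ and $\alpha'$ are non perfect powers, uniqueness of the principal basis (by transfer) gives $\alpha=\alpha'$. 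Cancelling the exponents then gives $f(\alpha)\delta_1=f(\alpha)\delta_3$, hence $\delta_1=\delta_3$. Thus $\gamma_3=\gamma_1$ and $\delta_3=\delta_1$, so $(\gamma_1,\delta_1)=(\gamma_3,\delta_3)\sim(\gamma_2,\delta_2)$. This yields $q_1=q_2$, the desired contradiction.

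The main point to check carefully is the uniqueness step: that $x^{a}=y^{b}$ with $x,y$ non perfect powers forces $x=y$ and $a=b$. This is a standard first-order fact about $\N$ (write $x^a=y^b$, then $PB$ of both sides equals $x$ and $y$ respectively), so it transfers to $^{\ast}\N$. The other delicate point is ensuring the replacement given by Lemma \ref{lem:list}.\ref{T2}/\ref{T4} is applied to pairs in $^{\ast}\N^2$ and to the function $E$ with values in $\N$. This works because \ref{T2} is stated for arbitrary standard functions, and the paper uses it this way in Remark \ref{ReformulationRm}. The hypothesis that the exponent is a function $f(\alpha)$ of the base is exactly what makes $\gamma_3$ determined by its principal basis. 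Without it, $\gamma_3$ could be $\alpha^{\beta'}$ with $\beta'\neq\beta$, as in Lemma \ref{FundRemCut}.
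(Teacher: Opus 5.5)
Your proof is correct and follows the paper's argument: use Lemma \ref{lem:list}.\ref{T4} to pass to generators with $\gamma_1^{\delta_1}=\gamma_3^{\delta_3}$, write both first coordinates as $PB(\cdot)^{f(PB(\cdot))}$ with non-perfect-power bases, and use uniqueness of the principal basis to force the two pairs to coincide. Your explicit transfer of the property $n=PB(n)^{f(PB(n))}$ to every generator of $p$ spells out a step the paper leaves implicit. The preliminary normalization $\gamma_1=\alpha^{f(\alpha)}$ is harmless but unnecessary.
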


\begin{proof} Assume the opposite, that $E(q_{1})=E(q_{2})$. Let $(\alpha_{i},\beta_{i})\in\mu (q_{i})$ for $i=1,2$ be such that $\alpha_1^{\beta_1}=\alpha_2^{\beta_2}$. As $p$ is generated by $\alpha^{f(\alpha)}$ for some $\alpha\in\,^{\ast}\N$, it follows that there are non perfect powers $\alpha_{3}\sim\alpha_{4}\in\,^{\ast}\N$ so that $\alpha_{i}=\alpha_{i+2}^{f(\alpha_{i+2})}$ for $i=1,2$. Hence $\alpha_3^{f(\alpha_{3})\beta_1}=\alpha_4^{f(\alpha_{4})\beta_2}$ which, as $\alpha_{3}$ and $\alpha_{4}$ are not perfect powers, forces $\alpha_{3}=\alpha_{4}$, hence $\beta_{1}=\beta_{2}$. Therefore $q_{1}=q_{2}$.
\end{proof}

\section{Conclusions}\label{sec:conclusions}

Our results leave us with a few open questions. First, our Theorem \ref{alphthm:A} serves as a bridge between exponentiation and divisibility; this link, however, might be further studied. The paper \cite{So8} considers the equivalence classes of powers of prime ultrafilters with respect to mutual divisibility; this can be generalized to arbitrary ultrafilters, thus replacing the question whether $E(p)=E(q)$ with the question whether these two ultrafilters are in the same equivalence class.

Turning to $E_{1}(p,q)=E_{2}(p,q)$, our main question is:

\begin{pb}
Is the conjunction of the conditions (i) and (ii) in Theorem \ref{alphthm:B} sufficient for $E_1({p},{q})=E_2(q,p)$?
\end{pb}

We are unsure what the answer to the previous question will be. In the case of a negative one, it would be interesting to understand what other necessary conditions can be imposed on pairs $(p,q)$ witnessing $E_{1}(p,q)=E_{2}(p,q)$. In the case of a positive one, it would be interesting to understand how to adapt the result to the more general equation $E(p)=E(q)$ for $p,q\in\beta(\N^{2})$. Related to this problem, we ask our final question, which is basically asking if Theorem \ref{pfree} can be reversed.

\begin{pb} Let $p\in\beta\N$ and $T_{p}=\{q\in\beta(\N^{2})\mid \pi_{1}(q)=\pi_{2}(q)=p\}$. Assume that for all distinct $q_{1},q_{2}\in T_{p}$ holds $E(q_{1})\neq E(q_{2})$. Is it true that generators of $p$ must be of the form $\alpha^{f(\alpha)}$ for some non perfect power $\alpha$?\end{pb}

{\it Acknowledgements.} The second author gratefully acknowledges financial support of the Ministry of Science, Technological Development and Innovation of the Republic of Serbia (Grants No.\ 451-03-33/2026-03/ 200125 and 451-03-34/2026-03/ 200125).

\end{document}